\pdfoutput=1 
\documentclass[dvipsnames,table]{amsart}

\usepackage[utf8]{inputenc}
\usepackage[T1]{fontenc}
\usepackage{geometry}
\usepackage{xcolor}

\usepackage{amsfonts}
\usepackage{amssymb}
\usepackage{mathtools}
\usepackage{tikz}
\usetikzlibrary{calc,tikzmark}
\usepackage{tikz-cd}
\usepackage{pgfplots}
\pgfplotsset{compat=1.18}
\usepackage{float}
\usepackage{booktabs}

\usepackage[backend=biber,style=alphabetic,sorting=nyt,citestyle=alphabetic,maxbibnames=99,maxalphanames=4]{biblatex}
\usepackage[
    pdfusetitle,
    colorlinks=true,
    linkcolor=CtpMauve,
    citecolor=CtpBlue,
    urlcolor=CtpPeach,
    filecolor=CtpGreen,
    menucolor=CtpRed,
    runcolor=CtpRed,
    bookmarks=true,
    bookmarksopen=true,
    bookmarksopenlevel=2,
    pdfstartview=Fit,
    pdfpagemode=UseOutlines
]{hyperref}
\usepackage{algorithm}
\usepackage{algpseudocode}

\usepackage[nameinlink,capitalise,noabbrev]{cleveref}
\crefname{data}{data}{data}
\Crefname{data}{Data}{Data}

\newtheorem{theorem}{Theorem}[section]
\newtheorem{proposition}[theorem]{Proposition}

\theoremstyle{definition}
\newtheorem{example}[theorem]{Example}
\newtheorem{data}[theorem]{Data}
\theoremstyle{remark}

\newtheorem*{remark*}{Remark}
\numberwithin{equation}{section}
\definecolor{CtpPink}{HTML}{ea76cb}
\definecolor{CtpMauve}{HTML}{8839ef}
\definecolor{CtpRed}{HTML}{d20f39}
\definecolor{CtpPeach}{HTML}{fe640b}
\definecolor{CtpYellow}{HTML}{df8e1d}
\definecolor{CtpGreen}{HTML}{40a02b}
\definecolor{CtpTeal}{HTML}{179299}
\definecolor{CtpSky}{HTML}{04a5e5}
\definecolor{CtpBlue}{HTML}{1e66f5}
\definecolor{CtpText}{HTML}{000000}
\definecolor{CtpOverlay2}{HTML}{767676}
\definecolor{CtpOverlay0}{HTML}{9e9e9e}
\definecolor{CtpSurface1}{HTML}{c8c8c8}
\definecolor{CtpSurface0}{HTML}{dcdcdc}
\definecolor{CtpBase}{HTML}{ffffff}
\definecolor{CtpMantle}{HTML}{f5f5f5}

\definecolor{darkcyan}{rgb}{0, 0.7, 0.7}
\definecolor{darkgreen}{rgb}{0, 0.7, 0}
\definecolor{truemagenta}{rgb}{1, 0, 1}
\definecolor{amber}{rgb}{1.0, 0.75, 0.0}

\tikzset{
  panel/.style={
    fill=CtpMantle,
    rounded corners,
    inner sep=8pt,
    text width=0.95\textwidth,
    align=center,
  },
  ctpnode/.style={
    draw=CtpText,
    rectangle,
    rounded corners,
    text=CtpBase,
    font=\small,
    minimum width=32pt,
    minimum height=14pt,
    inner sep=2pt,
  },
  description/.style={
    fill=CtpSurface0,
    text=CtpText,
    rounded corners=3pt,
    draw=CtpOverlay0,
    inner sep=5pt,
    text width=5cm,
    align=center,
    font=\footnotesize,
  },
  description-wide/.style={
    description,
    text width=5.5cm,
  },
  description-small/.style={
    description,
    text width=2.3cm,
  },
  description-leibniz/.style={
    description,
    text width=6.6cm,
  },
}
\usepackage{xspace}
\newcommand{\update}{\mathrel{\gets}}
\newcommand{\flowchartscale}{0.8}

\title{Automated proofs of unstable Adams differentials}
\author{Jake Francis Baer}
\thanks{Email: \texttt{francis@wayne.edu}}
\subjclass[2020]{Primary 55-08; Secondary 55Q40, 55T15}
\keywords{Unstable homotopy groups of spheres, unstable Adams spectral sequence, lambda algebra, EHP sequence, computer-assisted proofs}

\begin{document}

\begin{abstract}

\noindent We present a computer-based approach to computing 
differentials in the unstable Adams spectral sequence by systematically
applying the unstable Leibniz rule and naturality with respect to maps in the EHP sequence. We record our results in tables of upper and lower bounds on the orders of 2-primary unstable homotopy groups of spheres through the unstable 50-stem. We provide examples of proofs for several differentials and give a guide to interpreting the associated unstable Adams charts and flow chart diagrams for differential proofs. \end{abstract}

\maketitle

\setcounter{tocdepth}{1}
\tableofcontents

\section{Introduction}
\noindent The homotopy type of a cell complex is encoded in its attaching maps, which specify how each cell is glued to the lower-dimensional skeleton during inductive construction. Cellular approximation allows such maps to be decomposed into elements of homotopy groups of spheres, so the classification of cell complexes up to homotopy equivalence is governed by the structure of these groups.
The explicit computation of unstable homotopy groups of spheres begins with Hopf's construction of the first nontrivial elements via the Hopf invariant~\cite{Hopf1931} and the low-stem computations of Pontryagin~\cite{Pontrjagin_1938} and G.~W. Whitehead~\cite{Whitehead_GW_1950}; Serre's spectral sequence methods made systematic calculation possible~\cite{Serre1953}. Toda’s foundational work applied the EHP sequence and bracket arguments, yielding tables of $\pi_{n+k}(S^n)$ for $k\leq 19$~\cite{Toda_1962}. Mimura and collaborators pushed these computations through roughly the $24$-stem~\cite{MimuraToda1963, Mimura1964, Mimura1965, MimuraMoriOda1975}. Oda extended the $2$-primary computations through the 30-stem with partial results in the 31- and 32-stems using refinements of Toda’s methods~\cite{Oda1976, Oda1977}. The remaining ambiguities were resolved by Inoue--Mukai and Miyauchi--Mukai~\cite{InoueMukai2012, MiyauchiMukai2017}. Separately, Curtis and Mahowald determined the unstable homotopy groups of $S^3$ through the 52-stem in the only published application of the unstable Adams spectral sequence to computing unstable stems~\cite{Curtis_Mahowald_1989}.

This paper consists of two distinct computations.
The first is a modern reimplementation of Curtis' algorithm for computing the homology of the lambda algebra, which is the $E_2$-page of the unstable Adams spectral sequence. We compute the unstable Adams $E_2$-page for all spheres through total degree 76 as well as all values of the algebraic EHP maps (\Cref{prop:ehp}) and all unstable algebraic compositions on the Adams $E_2$-page. The second computation uses the algebraic structure of the unstable Adams $E_2$-page and input differentials from the stable Adams spectral sequence to propagate unstable Adams differentials using the unstable Leibniz rule and naturality of the EHP maps. This propagator tries all possible combinations of EHP maps and unstable compositions to constrain the possible values of an unstable Adams differential until only one remains. The precise combination of EHP maps and unstable compositions used to compute a differential is recorded in computer-generated flow charts. By resolving unstable Adams differentials and computing the $E_\infty$-page we prove the following main theorem. 

\begin{theorem}
\label{thm:main}
Upper and lower bounds on the orders of the $2$-primary homotopy groups $\pi_{n+k}(S^n)$ for $0 \le k \le 50$ are given in \Crefrange{tab:bounds-true-2-14}{tab:bounds-true-39-50}.
\end{theorem}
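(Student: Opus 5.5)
The bounds in the tables come from running the unstable Adams spectral sequence at the prime $2$ for each sphere $S^n$ and reading off the size of the $E_\infty$-page in each stem $k \le 50$. The proof therefore has three stages: compute $E_2$, resolve as many differentials as possible, and turn the partial information into two-sided bounds.

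\textbf{Step 1: the $E_2$-page.} By the Bousfield--Kan/6 authors description, the $E_2$-page for $S^n$ is the homology of the sub-complex $\Lambda(n)$ of the lambda algebra. I would compute $H^{s,t}(\Lambda(n))$ for all $n$ and all $t-s\le 50$ (in fact through total degree $76$, to have room for targets and sources) using Curtis' algorithm. This algorithm performs a lexicographic reduction on admissible monomials and produces explicit cycle representatives.

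Alongside the groups, I would record two further pieces of structure:
\begin{itemize}
\item the algebraic EHP maps $E\colon H(\Lambda(n-1))\to H(\Lambda(n))$, $H\colon H(\Lambda(n))\to H(\Lambda(2n-1))$ and $P\colon H(\Lambda(2n+1))\to H(\Lambda(n))$ of \Cref{prop:ehp}, computed on representatives;
\item the unstable composition products on the $E_2$-pages.
\end{itemize}
This step is mechanical but must be carried out correctly and verified, for instance against the algebraic EHP long exact sequence and the known stable $\operatorname{Ext}$.

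\textbf{Step 2: differentials.} I would start from the known stable Adams differentials in this range, taken as input; they apply to $S^n$ for $n$ large relative to $k$. I would then propagate them to smaller spheres using two tools.
\begin{itemize}
\item \emph{Naturality.} Suspension, Hopf invariant and $P$ are induced by maps of spectral sequences (the EHP fibration and its algebraic model). Consequently $d_r(E x)=E\,d_r(x)$, $d_r(Hx)=H\,d_r(x)$, and similarly for $P$, up to the appropriate filtration shifts.
\item \emph{The unstable Leibniz rule.} This holds for composition products $x\circ y$.
\end{itemize}
For each unresolved class $x$ on page $r$, the propagator lists all candidate values of $d_r(x)$ in the target group. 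It then discards every candidate that contradicts a constraint obtained by pushing $x$ forward or pulling it back along some chain of $E$, $H$, $P$ maps, or by decomposing $x$ or a multiple of $x$ as a product. It iterates to a fixed point, passing to $E_{r+1}$ whenever all $d_r$ in a range are settled. Each resolved differential is logged together with its derivation chain, which is exactly the data in the flow charts.

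\textbf{Step 3: bounds.} Some differentials remain undetermined in each stem. For these, the upper bound comes from assuming every unresolved differential is zero, which maximizes $E_\infty$. The lower bound comes from assuming the unresolved differentials are as large as the remaining candidate sets allow, which minimizes the surviving classes. In both cases the order of $\pi_{n+k}(S^n)_{(2)}$ is $2^{\#E_\infty\text{ classes}}$, because convergence for simply connected spheres ensures that the filtration quotients of the $2$-completed group are exactly the $E_\infty$ entries. One caveat concerns the minimizing configuration: it must be chosen consistently across stems, since one class may be a potential source in one stem and a potential target in another. I would handle this by taking, per stem, the minimum over consistent assignments. I would cross-check the resulting entries against Toda and Oda for $k\le 32$ and against Curtis--Mahowald for $S^3$.

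\textbf{Main obstacle.} I expect the hardest part to be Step 2. The concern is not only the search space but the correctness of the deduction rules near the metastable boundary. In that region the algebraic EHP maps and the geometric ones can differ by filtration jumps, so naturality only determines differentials modulo higher filtration. I would first restrict naturality to cases where the image class is detected without indeterminacy. I would treat every other case conservatively by widening the candidate set rather than eliminating options, so that any unresolved differential only loosens the bounds and never makes them wrong.
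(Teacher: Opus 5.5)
Your architecture matches the paper's: Curtis' algorithm for $H^{*,*}\Lambda(n)$, tabulated algebraic $\mathrm{E}$, $\mathrm{H}$, $\mathrm{P}$ and compositions, propagation of known differentials by naturality and the unstable Leibniz rule, and bounds read off $E_\infty$. The gap is in your inputs. Your only source of nontrivial differentials is the Adams spectral sequence of the sphere spectrum, and that is not enough to obtain the tabulated bounds.

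The paper has a second input, which it calls essential. This is the comparison map $\mathrm{s}_{2n+1}\colon E_r^{s,f}(S^{2n+1})\to E_r^{s-2n+1,f-1}(\mathbb{S}/2)$ of \eqref{eq:der}, coming from \Cref{thm:mah}: an explicit chain map $\Lambda(2n+1)\to e_{2n}\Lambda\oplus e_{2n-1}\Lambda$ covering Mahowald's map $\Omega^3S^{2n+1}\to\Omega^\infty\Sigma^\infty\Sigma^{4n-4}\mathbb{RP}^2$. Through it, the completely known Adams differentials for the Moore spectrum are pulled back to odd spheres; for $n=1$ this is the comparison Curtis and Mahowald used for $S^3$. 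The paper states that without it many differentials on $S^3$, $S^5$ and other low-dimensional spheres cannot be determined by EHP naturality and the Leibniz rule alone. The flow charts show this input in use: \Cref{ex:4_20_6} draws on $\mathrm{C2}(10,3)$ and $\mathrm{C2}(19,5)$, and the proof of the differential on $\mathcal{U}_3^{9,37,7}$ passes through $\mathcal{U}_3^{3,53,13}$ and $\mathrm{C2}(52,12)$. Since the theorem asserts the specific table entries, your procedure would certify valid but wider ranges in places, i.e.\ a weaker theorem. To close the gap you must construct the chain map of \Cref{thm:mah} and justify that it is induced by a map of unstable resolutions, so that it commutes with differentials. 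The paper's sketch does this with the Mahowald--Thompson resolution of the fiber of the double suspension, Cohen's colimit, Harper--Miller/Thompson convergence, and uniqueness up to homotopy of chain maps between the complexes $\operatorname{cof}_n(d_1)$.

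There are three smaller points. First, your worry about filtration jumps is unnecessary here. \Cref{prop:ehp} asserts that the chain maps $\mathrm{E}$, $\mathrm{H}$, $\mathrm{P}$ commute with the unstable Adams differentials, so $\mathrm{F}\circ\mathbf{d}_r=\mathbf{d}_r\circ\mathrm{F}$ holds exactly on $E_r$. Filtration jumps concern which homotopy classes are detected on $E_\infty$, not the differentials, and widening candidate sets would only discard valid constraints. Second, keeping a separate candidate list for each class loses the correlations between basis elements that these constraints impose, since images under $\mathrm{F}$ and products are sums of basis elements. The paper instead intersects affine subspaces $\boldsymbol{\delta}_r+\mathcal{I}_r$ of whole matrices on each tridegree, and adds an element-wise cycle/boundary solver for pages that follow an unresolved $d_r$. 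Third, the paper settles $17$ $d_3$-differentials (\Cref{tab:contradiction-tridegrees}) by assuming $d_3=0$ and deriving a contradiction on $E_4$. Your ``minimum over consistent assignments'' could capture these only if consistency is tested on all later pages, which your propagator as described does not do.
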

\noindent Through the unstable 32-stem these bounds are consistent with, and largely recover, the previously known orders due to the authors cited above (together with Curtis--Mahowald's computations for $S^3$); the uniform machine-generated determination of unstable Adams differentials for all spheres, and the resulting bounds beyond the 32-stem, are new.

\subsection{The lambda algebra}

In \Cref{sec:lambda} we describe our computational approach to the structure of the unstable Adams $E_2$-page. We reimplement Curtis' lambda algebra algorithm from scratch. Our implementation contains nothing conceptually new and the theory underlying this algorithm is well exposited in the many sources we reference in \Cref{sec:lambda}. By writing our implementation in a fast modern language and optimizing the key inner loops, we compute the homology of the lambda algebra and all algebraic compositions and algebraic EHP maps through total degree 76. While this algorithm for computing algebraic EHP maps and algebraic compositions has existed for several decades, neither of these has been computed exhaustively in a significant range, let alone applied to the computation of unstable Adams differentials. The most conceptually original part of \Cref{sec:lambda} consists of our generalization and exhaustive computation of the map of $E_2$-pages used by Curtis and Mahowald to compute unstable Adams differentials for $S^3$ by comparison with the stable Adams spectral sequence for $\mathbb{S}/2$~\cite{Curtis_Mahowald_1989}. While Curtis and Mahowald use this map to compute unstable Adams differentials, they never compute its values explicitly and only indirectly deduce its values by using the fact that this map is an isomorphism in sufficiently high filtration relative to the stem. This map is essential to our results since without it there are many differentials for low-dimensional spheres like $S^3$ and $S^5$ that we are unable to compute using EHP naturality and the Leibniz rule alone.
\begin{remark*}
  By running our Curtis algorithm on the Wayne State University high-performance grid, we computed the homology of the lambda algebra additively in degrees $s + f < 90$. The database of cycle representatives for this computation exceeds 50~GB in size, so we did not attempt to compute algebraic compositions between total degrees 76 and 90.
\end{remark*}
\subsection{Unstable Adams differentials}
In \Cref{sec:ehp} we describe our differential propagation algorithm, which uses the algebraic data from \Cref{sec:lambda}. Our algorithm systematically applies naturality of unstable Adams differentials with respect to EHP maps and the unstable Leibniz rule, which are stated precisely in \Cref{prop:ehp} and \Cref{prop:leibniz} below. Our approach to differential propagation is based on the approach in~\cite{Beauvais_Feisthauer_2022} to propagating stable Adams $d_2$-differentials using the Leibniz rule for the sphere spectrum. Our implementation starts with the space of all possible matrix representations of a $d_r$-differential considered as a map of $\mathbb{F}_2$-vector spaces. In order to apply naturality and the Leibniz rule, we rephrase \Cref{prop:ehp} and \Cref{prop:leibniz} as statements that certain diagrams have to commute. Our propagation algorithm then iteratively checks for matrices that fail to commute in the relevant diagrams, and removes these from our space of possible $d_r$-differential values. By repeating this process many times on all bidegrees for all spheres, we can narrow down the space of possible $d_r$-differentials until there is a unique matrix that respects all of the constraints imposed by \Cref{prop:ehp} and \Cref{prop:leibniz}. This process of iteratively applying constraints in order to deduce a unique $d_r$-differential value is recorded in the form of flow charts from which the reader may reconstruct the proof of any differential. 
\subsection{Results}
In \Cref{sec:results} our results are presented in unstable Adams charts and tables of upper and lower bounds on the 2-primary homotopy groups of spheres through dimension 50. The algebraic data generated in \Cref{sec:lambda} and the differential propagation algorithm described in \Cref{sec:ehp} allow us to determine all differentials in the unstable Adams spectral sequence for most spheres of dimension at most 50 and in most stems up to 50. We have partial results in some cases. Our results are visualized in interactive Adams charts, rendered with the SeqSee visualization engine~\cite{Beauvais_Feisthauer_Isaksen_2025}, displaying the
spectral sequences for spheres $S^n$ with $2 \leq n \leq 50$. These charts display proven differentials as well as the remaining possible unknown differentials on each page. The unstable Adams spectral sequence gives us an upper and lower bound on the order of each $2$-primary unstable stem. These upper and lower bounds are tabulated in \Cref{app:bounds}. The homotopy groups whose orders are given in \Cref{tab:bounds-true-27-38} and \Cref{tab:bounds-true-39-50} occur entirely in the metastable range, i.e.\ the range $s \leq 3n - 4$. The most interesting entries are therefore located in the purely unstable region in the lower left corner of \Cref{tab:bounds-true-2-14}. For some spheres such as $S^5$, $S^9$, $S^{11}$, $S^{13}$, $S^{17}$, $S^{18}$, and $S^{19}$, few uncertainties exist and we may read off the exact orders of their 2-primary homotopy groups through at least the 45-stem. For other spheres like $S^{12}$ and $S^{24}$ there are many remaining uncertainties and we can only bound the order of most of their homotopy groups instead of providing an exact value. 
\subsubsection{Reading the tables}
\Crefrange{tab:bounds-true-2-14}{tab:bounds-true-39-50} in \Cref{app:bounds} display the upper and lower bounds on the orders of 2-primary unstable homotopy groups computed using the differential propagation algorithm in \Cref{sec:ehp} and the unstable Adams $E_2$-page data from \Cref{sec:lambda}. To suppress redundant stable information, we indicate the stable range by the vertical lines in the upper part of each table. The 2-primary metastable range is contained between the stable range vertical lines and the pink staircase boundary drawn through \Cref{tab:bounds-true-2-14} and \Cref{tab:bounds-true-15-26}. The entry in column $n$ and row $s$ bounds the order of the $2$-primary component of $\pi_{n+s}(S^n)$, recorded through its base-$2$ logarithm. A single entry $e$ marks a homotopy group with no remaining unknown differentials, whose $2$-primary component has order exactly $2^e$. When a homotopy group still admits unknown differentials, we can only bracket
its order. The entry is then a range $e/\ell$, written upper bound first,
meaning that the $2$-primary component of $\pi_{n+s}(S^n)$ has order $2^m$ for
some $\ell \le m \le e$.
\subsection{Relation to prior work}
Our results have been checked against the existing work of Toda, Mimura, Miyauchi, Mukai, Inoue, Mori, and Oda on 2-primary homotopy groups of spheres. Our results are compatible with theirs in the sense that the orders they compute fit between our upper and lower bounds through the unstable 32-stem. There are a few places between the 19-stem and the 32-stem where our computation admits an uncertain differential that could be resolved by referring to the work of these authors. In such places, we choose not to fill in this outside information. By doing this we ensure that our results are entirely machine generated in an attempt to minimize the risk of human error.
In forthcoming joint work with Balderrama, Belmont, and Isaksen, we use the unstable Adams spectral sequence in conjunction with the EHP spectral sequence to compute all metastable homotopy groups of spheres whose stabilizations lie in stems 0 through 66~\cite{BBBI_metastable}.
\section{The unstable Adams \texorpdfstring{$E_2$}{E2}-page}
\label{sec:lambda}

\noindent We begin by reviewing the structure of the unstable Adams spectral sequence and describing the computational methods used to determine its $E_2$-page. Recall that there is a cofree resolution of $\tilde{H}_{\ast}(S^n; \mathbb{F}_2)$ in unstable comodules over the dual Steenrod algebra $\mathcal{A}_*$ that is of finite type in each degree and is presented as $\mathcal{A}_* \underline{\otimes} \Sigma^n \Lambda(n)$, following Singer~\cite{Singer_1975}. Here $\mathcal{A}_* \underline{\otimes} N = \bigoplus_j J(j) \otimes N_j$ is the injective unstable comodule on a graded $\mathbb{F}_2$-vector space $N$, where $J(j) \subseteq \mathcal{A}_*$ is the span of the monomials $\xi_1^{r_1} \xi_2^{r_2} \cdots$ with $\sum_i r_i \leq j$,
and $\Lambda(n)$ is a subcomplex of the lambda algebra $\Lambda$, the differential bigraded algebra presented by
\begin{equation}
  \label{eq:lambda}
  \Lambda = \mathbb{F}_2\langle \lambda_0, \lambda_1, \lambda_2, \dots\rangle / \mathord{\sim}
  \qquad
  \begin{aligned}
    \lambda_i \lambda_{2i+1+m}
      &\sim \textstyle\sum_{j\ge0} \tbinom{m-1-j}{j} \lambda_{i+m-j} \lambda_{2i+1+j} \\
    d(\lambda_n)
      &= \textstyle\sum_{j\ge0} \tbinom{n-1-j}{j+1} \lambda_{n-1-j} \lambda_j 
  \end{aligned}
\end{equation}
with the convention that $\tbinom{a}{b} = 0$ unless $0 \le b \le a$. As an $\mathbb{F}_2$-vector space, $\Lambda$ has a basis of \emph{admissible} monomials $\lambda_{i_1}\lambda_{i_2}\cdots\lambda_{i_f}$, those satisfying $2 i_k \geq i_{k+1}$ for all $k$. We write $\Lambda(n) \subseteq \Lambda$ for the subcomplex spanned by the admissible monomials with $i_1 \leq n - 1$, and $\Lambda^{s,f}(n)$ for its subspace spanned by such monomials of length $f$ whose indices sum to $s$~\cite{Singer_1975}. We write $\alpha \smallsmile \beta$ for the concatenation product of $\alpha, \beta \in \Lambda(n)$, and reserve $\circ$ for the induced composition in homology. We write $[\lambda_I]$ for the class in homology whose cycle representative has leading term $\lambda_I$, with respect to the lexicographic order on the index sequences of admissible monomials.

This algebraic resolution can be realized as a filtration in spaces that begets an unstable version of the Adams spectral sequence and has signature
\begin{align*}
  E_{2}^{s, f}(S^n) = H^{s, f}\Lambda(n) \Rightarrow (\pi_{s + n} S^n)^{\wedge}_{2}
\end{align*}
by work of Massey--Peterson and Harper--Miller~\cite{Massey_Peterson_1967, Harper_Miller_1989}.
An alternative construction of this spectral sequence is given using the lower central series filtration on simplicial groups~\cite{Bousfield_Curtis_1970, Curtis_1971, Liebowitz_1972}; the identification of $\Lambda(n)$ as the $E_1$-page of the $p$-lower central series spectral sequence appears already in \cite[Section~5.4]{Bousfield_Curtis_Kan_Quillen_Rector_Schlesinger_1966}. For a modern survey of these ideas see~\cite{Behrens_Malin_2024}. Recall that, $2$-locally, James' fibration $S^n \to \Omega S^{n+1} \to \Omega S^{2n+1}$ gives rise to the long exact EHP sequence relating the homotopy groups of neighboring spheres, in which $\mathrm{E}$ is induced by suspension, $\mathrm{H}$ by the James--Hopf invariant, and $\mathrm{P}$ by Whitehead products; see~\cite{Mahowald_Thompson_1995} for a survey. When considered as a family, the unstable Adams spectral sequences for all $S^n$ exhibit the following structure:

\begin{proposition}[\cite{Pavutnitskiy_Wu_2019, Singer_1975, Curtis_1971}] There are chain maps
\label{prop:ehp}
\begin{align*}
  \Lambda^{s, f}(n) &\stackrel{\mathrm{E}}{\longrightarrow} \Lambda^{s, f}(n + 1)
\stackrel{\mathrm{H}}{\longrightarrow}
\Lambda^{s - n, f - 1}(2n + 1)
\stackrel{\mathrm{P}}{\longrightarrow}
\Lambda^{s - 1, f + 1}(n),
\end{align*}
which detect $\mathrm{E}$, $\mathrm{H}$, and $\mathrm{P}$ in $\pi_\ast(-)$ and commute with unstable Adams differentials.
\end{proposition}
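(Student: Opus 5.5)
The plan is to write each of the three maps down explicitly on admissible monomials, check that each is a chain map, and then identify it with the map induced on the $E_1$-page by a map of towers (the lower central series filtration, or the Massey--Peterson resolutions). That identification gives both the detection statement and compatibility with all $d_r$.

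\emph{Step 1: the algebraic maps.} On admissible monomials we set:
\begin{itemize}
\item $\mathrm{E}$ is the inclusion $\Lambda(n)\subseteq\Lambda(n+1)$. The condition $i_1\le n-1$ implies $i_1\le n$, and the inclusion is a chain map because $\Lambda(n)$ is a subcomplex.
\item $\mathrm{H}(\lambda_{i_1}\lambda_{I'})=\lambda_{I'}$ when $i_1=n$, and $\mathrm{H}$ is zero otherwise. For $\lambda_n\lambda_{I'}$ admissible we have $i_2\le 2n$, so $\lambda_{I'}\in\Lambda(2n+1)$. The degree shift $(s-n,f-1)$ is then immediate.
\item $\mathrm{P}$ is the Whitehead-product map of Singer, Curtis and Pavutnitskiy--Wu. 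On the leading term it sends $\lambda_I\mapsto\lambda_{n-1}\lambda_I$ whenever this is admissible, with correction terms coming from rewriting by the relations \eqref{eq:lambda}. Equivalently, $\mathrm{P}$ is the connecting map of a short exact sequence of complexes
\[
0\to\Lambda(n)\xrightarrow{\mathrm{E}}\Lambda(n+1)\xrightarrow{\mathrm{H}}\Sigma^{n}\Lambda(2n+1)\to0 .
\]
\end{itemize}

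\emph{Step 2: the chain map and exactness checks.} I would prove the short sequence above is an exact sequence of complexes. It is exact as vector spaces by the admissible basis. The substantive part is showing that $\mathrm{H}$ commutes with $d$. Since $d$ is a derivation, $d(\lambda_n\lambda_{I'})=d(\lambda_n)\lambda_{I'}+\lambda_n d(\lambda_{I'})$. The first term has leading index $<n$ after re-admissibilization, so it lies in $\Lambda(n)$. In the second term we must show that rewriting $\lambda_n\cdot(\text{terms of }d\lambda_{I'})$ into admissible form either leaves the leading $\lambda_n$ untouched or lands in $\Lambda(n)$. This uses the standard fact that the relations only lower the first index: in $\lambda_i\lambda_{2i+1+m}$ the rewritten terms begin with $\lambda_{i+m-j}$, and I would check this from \eqref{eq:lambda}. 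Given exactness, $\mathrm{P}$ is defined as the connecting homomorphism and is a chain-level map after a choice of splitting. Here I would cite Singer/Pavutnitskiy--Wu for the explicit formula rather than rederive it.

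\emph{Step 3: geometric realization.} The main obstacle is realizing the algebraic EHP sequence geometrically and showing it is compatible with the higher differentials. I would use the Curtis lower central series spectral sequence of Kan's loop group $GS^n$, whose $E_1$-page is $\Lambda(n)$. James' fibration $S^n\to\Omega S^{n+1}\to\Omega S^{2n+1}$ can be modelled by simplicial groups. Suspension induces a map of filtered simplicial groups, hence a map of the whole spectral sequence; this gives $\mathrm{E}$ commuting with all $d_r$. The Hopf invariant is a map of simplicial sets which is compatible with the filtrations, again after suitable shifts. Curtis shows that on $E_1$ this is the projection $\mathrm{H}$.

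$\mathrm{P}$ is the hardest case. One route is to show that the fibration yields a short exact sequence of filtered objects, with associated graded the sequence of Step 2, and then to apply the general fact that the connecting map of a short exact sequence of towers induces maps on $E_r$ commuting with $d_r$, up to the stated filtration shift of $+1$. Alternatively, one identifies $\mathrm{P}$ with the Samelson/Whitehead product with $\iota_{n-1}$ at the level of lower central series, as in Pavutnitskiy--Wu. Detection in $\pi_*$ then follows because these maps of spectral sequences converge to the geometric $\mathrm{E}$, $\mathrm{H}$, $\mathrm{P}$ by construction. Finally, the comparison of this spectral sequence with the Massey--Peterson/Harper--Miller version from $E_2$ onward is taken from \cite{Bousfield_Curtis_1970, Harper_Miller_1989}.
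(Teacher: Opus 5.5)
The paper gives no argument here beyond citing Curtis, Singer and Pavutnitskiy--Wu, and your overall plan follows the route of those sources. The problems are in the parts you propose to verify yourself.

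\emph{The lemma in Step 2 is backwards.} In $\lambda_i\lambda_{2i+1+m}\sim\sum_j\binom{m-1-j}{j}\lambda_{i+m-j}\lambda_{2i+1+j}$, a nonzero coefficient forces $j\le (m-1)/2$. So the new first index $i+m-j$ is strictly \emph{larger} than $i$ (e.g.\ $\lambda_0\lambda_2=\lambda_1\lambda_1$). This invalidates your reason for $d(\lambda_n)\lambda_{I'}\in\Lambda(n)$. It also means a rewrite touching the leading $\lambda_n$ would push you out of $\Lambda(n+1)$, not into $\Lambda(n)$. The two terms need different arguments:
\begin{itemize}
\item $\lambda_n\,d(\lambda_{I'})$ needs no rewriting at all. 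Indeed $d(\lambda_{I'})\in\Lambda(i_2+1)\subseteq\Lambda(2n+1)$, and $\lambda_n\lambda_J$ is admissible for admissible $\lambda_J$ with $j_1\le 2n$.
\item $d(\lambda_n)\lambda_{I'}$ needs an \emph{upper} bound: rewriting an inadmissible $\lambda_a\lambda_b$ only produces first indices $\le b-a-1$. Together with $i_2\le 2n$ and an induction over repeated rewrites, this keeps the first index $\le n-1$. Alternatively, cite the standard isomorphism of complexes $\Lambda(n+1)/\Lambda(n)\cong\Sigma^n\Lambda(2n+1)$.
\end{itemize}

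\emph{Your leading-term formula for $\mathrm{P}$ is wrong and contradicts the bidegrees in the statement.} $\mathrm{P}$ raises length by $2$, whereas $\lambda_{n-1}\lambda_I$ raises it by $1$; it is also not admissible when $i_1>2n-2$. With the admissible splitting $\lambda_I\mapsto\lambda_n\lambda_I$, the chain-level connecting map is $\mathrm{P}(\lambda_I)=d(\lambda_n)\lambda_I$, rewritten in admissible form. For instance, when $n=2$, $\mathrm{P}(1)=d(\lambda_2)=\lambda_1\lambda_0$.

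\emph{The principle you use for $\mathrm{P}$ in Step 3 is not a general fact.} You assume that the connecting map of a short exact sequence of towers automatically induces a map of spectral sequences commuting with every $d_r$. Results of that shape are geometric boundary theorems, with their own hypotheses and proofs. Moreover, the James fibration is not presented as a short exact sequence of filtered simplicial groups. The combinatorial James--Hopf map is not a homomorphism, so even its compatibility with the lower central series filtration does not come for free. That $\mathrm{H}$ and $\mathrm{P}$ detect the geometric maps and commute with all $d_r$ is exactly the nontrivial input the paper takes from the cited references. In your write-up this step has to be a citation, not a consequence of the principle you state.
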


\begin{proposition}[\cite{Bousfield_Kan_1973a, Bousfield_Kan_1973b, Ivanov_Mikhailov_Wu_2018}]
\label{prop:leibniz}
There exists an unstable composition pairing
\begin{align*}
  H^{s, f}\Lambda(n) \otimes H^\ast \Lambda(n + s) &\xrightarrow{-\circ -} H^\ast \Lambda(n),\\
\alpha \otimes \beta &\longmapsto \alpha \smallsmile \mathrm{E}^{f}\beta,
\end{align*}
which detects geometric compositions in $\pi_\ast(-)$ and respects an unstable Leibniz rule
\begin{align*}
    d_r(\alpha \circ \mathrm{E} \beta) = d_r(\alpha) \circ \beta + \alpha \circ d_r(\mathrm{E} \beta).
\end{align*}
\end{proposition}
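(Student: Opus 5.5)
The statement to prove is \Cref{prop:leibniz}. I would derive it from the simplicial-group model of the unstable Adams spectral sequence, that is, the mod-$2$ lower central series spectral sequence. Write $G S^n$ for the Kan loop group and $\Gamma_r$ for its restricted lower central series. There the $E_1$-page is $\Lambda(n)$ by \cite[Section~5.4]{Bousfield_Curtis_Kan_Quillen_Rector_Schlesinger_1966}. The natural candidate for the pairing is composition of maps. Represent $\beta\in\pi_{k}(S^{n+s})$ by a map $S^{n+k}\to S^{n+s}$ and $\alpha\in\pi_{n+s}(S^n)$ by a map $S^{n+s}\to S^n$. The composite $\alpha\circ\beta$ is then the image of $\beta$ under the induced map $\alpha_*\colon\pi_*(S^{n+s})\to\pi_*(S^n)$. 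So the first step is to show that a map of Adams filtration $f$ induces a map of towers that shifts filtration by $f$. I would prove this by functoriality of the lower central series, in the form $\Gamma_r\to\Gamma_{r}$. The shift comes from representing $\alpha$ as an element of $\Gamma_{2^f}$-type filtration, following Bousfield--Kan's composition pairings \cite{Bousfield_Kan_1973a, Bousfield_Kan_1973b}.

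The second step is to identify the induced map on $E_1$. Let $a\in\Lambda^{s,f}(n)$ be a cycle representing $\alpha$. I would show that the map of towers induced by $\alpha$ acts on $E_1$ by $b\mapsto a\smallsmile \mathrm{E}^f b$. To do this, use the fact that the $E_1$-term is the free restricted Lie algebra functor, whose derived functors give $\Lambda$. The map then factors as a generator substitution, as in Curtis's computation \cite{Curtis_1971}. The iterated suspension $\mathrm{E}^f$ appears because concatenation after a length-$f$ word must land in the admissible range $i_1\le n+s-1$, which is shifted by $f$ from the unstable condition. I would check that $a\smallsmile \mathrm{E}^f b$ is admissible-compatible and lies in $\Lambda(n)$ using the relations \eqref{eq:lambda}. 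This gives well-definedness on homology and the claim that the pairing detects composition, since a morphism of spectral sequences converges to the map on abutments.

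For the Leibniz rule, I would use that composition with $\alpha$ is a map of towers only up to the filtration jump. It therefore induces a morphism of spectral sequences $E_r(S^{n+s})\to E_r(S^n)$ of degree $(s,f)$ only when $\alpha$ is a permanent cycle. In general one needs the pairing at the level of towers in both variables, following \cite{Ivanov_Mikhailov_Wu_2018}. Their smash/composition pairing of towers $\{X_r\}\times\{Y_r\}\to\{Z_{r}\}$ gives the derivation formula by the standard argument. One lifts $\alpha$ and $\mathrm{E}\beta$ to $E_r$-representatives and computes the boundary of their product in the filtered complex as the sum of the two boundaries. The suspension is needed on $\beta$ because the composition pairing is only bilinear and distributive after one suspension: composition is linear in the first variable only when the source is a suspension. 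This is why the formula reads $\alpha\circ \mathrm{E}\beta$.

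I expect the main obstacle to be exactly this last point. Composition is not bilinear unstably, so we must construct a pairing of towers in the second variable that is compatible with differentials. I would try first to use the James construction: replace $S^{n+s}$ by $\Sigma$ of something, so that $\mathrm{E}\beta$ becomes a co-H map. I would then check the Leibniz rule at $E_1$ as the derivation property of $d$ on $\Lambda$ with respect to $\smallsmile$, and propagate it to $E_r$ via a multiplicative-tower argument in the style of \cite{Bousfield_Kan_1973b}.
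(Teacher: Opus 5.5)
Your treatment of the $E_2$-pairing and of detection is fine in outline. A lift of a genuine class $\alpha$ to $\pi_*\Gamma_{2^f}GS^n$ gives a homomorphism $GS^{n+s}\to GS^n$ carrying $\Gamma_{2^j}$ into $\Gamma_{2^{j+f}}$. This is a filtration-$f$ map of spectral sequences acting on $E_1$ by left multiplication by a representative of $\alpha$. Well-definedness on $H^\ast\Lambda$ is purely algebraic anyway: $d$ is a derivation, and the products $c\smallsmile b$, $a\smallsmile e$ used to absorb boundaries stay in $\Lambda(n)$. But this only proves the partial rule $d_r(\alpha\circ\gamma)=\alpha\circ d_r(\gamma)$ for permanent cycles $\alpha$.

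The part of \Cref{prop:leibniz} in which $d_r(\alpha)$ enters is exactly what you leave open. You invoke an unspecified pairing of towers $\{X_r\}\times\{Y_r\}\to\{Z_r\}$ plus ``the standard argument''. Your fallback (make $\mathrm{E}\beta$ co-H via the James construction, check the derivation property in $\Lambda$, propagate by a multiplicative-tower argument) presupposes that same pairing. Your sketch also does not identify the right towers. In any pairing of towers, the derivation formula pairs $d_r(x)$ with the same second factor as $x$. Since $d_r(\alpha)$ has stem $s-1$, it can only be composed with classes on $S^{n+s-1}$. So the second tower must be that of $S^{n+s-1}$, not $S^{n+s}$, and the first must be a tower in which both $\alpha$ and $d_r(\alpha)$ live and pair with it. This degree shift is what forces the suspension in $\alpha\circ\mathrm{E}\beta$. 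Additivity is secondary, and the relevant condition there is that the second map be a suspension, not its source; composition is always additive in the second variable. The same point shows that the $E_1$ check does not propagate formally. The identity $d(ab)=d(a)b+a\,d(b)$ holds in $\Lambda$ for every $b\in\Lambda(n+s)$, whereas the $E_r$ statement only makes sense for $b$ coming from $S^{n+s-1}$.

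The missing idea is to put $\alpha$ and $d_r(\alpha)$ in one tower that composes with a single sphere. Regard $\alpha$ as a class in $\pi_1$ of $\{\Omega^{n+s-1}T_p\}$, where $\{T_p\}$ is the unstable Adams tower of $S^n$; then $d_r(\alpha)$ lies in $\pi_0$ of the same tower. What must be constructed is a filtration-additive smash pairing $\Omega^{n+s-1}T_p\wedge T'_q\to T_{p+q}$ lifting evaluation, where $\{T'_q\}$ is the tower of $S^{n+s-1}$. This is the kind of pairing the cited Bousfield--Kan machinery for cosimplicial resolutions produces, and it is the real content of the proposition. Once it exists, bilinearity is automatic and the standard pairing-of-towers argument gives $d_r(a\cdot b)=d_r(a)\cdot b+a\cdot d_r(b)$. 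Smashing $\beta$ with the circle coordinate is $\mathrm{E}\beta$, so this unwinds to $a\cdot b=\alpha\circ\mathrm{E}\beta$, $d_r(a)\cdot b=d_r(\alpha)\circ\beta$ and $a\cdot d_r(b)=\alpha\circ\mathrm{E}(d_r\beta)=\alpha\circ d_r(\mathrm{E}\beta)$, the last step by naturality of $\mathrm{E}$. That is precisely the stated rule, single suspension included. The paper itself gives no proof beyond the citations, but your proposal as written does not prove the Leibniz part: it names the obstacle and leaves it unresolved.
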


\subsection{The Curtis procedure}
Filtering $\Lambda(n)$ by the subcomplexes $\Lambda(m)$ for $m \leq n$ yields a spectral sequence known as the \emph{algebraic EHP spectral sequence}~\cite{Ravenel_1986, Curtis_Goerss_Mahowald_Milgram_1987}, whose filtration quotients are identified with shifted copies of the $\Lambda(2m + 1)$ by the maps $\mathrm{H}$ of \Cref{prop:ehp}.
The \textit{Curtis procedure} computes the homology of $\Lambda(n)$ by computing the differentials in this spectral sequence~\cite{Whitehead_1970,Curtis_1971}: each admissible monomial either survives as the leading term of a cycle representative of a homology class or is cancelled by a differential.
The Curtis procedure has had numerous well-documented implementations~\cite{Hansen_1974,Tangora_1985,Curtis_Goerss_Mahowald_Milgram_1987,Wang_Xu_2016}. Our implementation is written in Rust (version 1.96)~\cite{ehpreprint_data}.
We have computed the homology of $\Lambda(n)$ additively in all total degrees $s + f < 90$; the dataset released with this paper~\cite{ehpreprint_data} covers all total degrees $s + f \leq 76$. The data generated by our implementation of the Curtis homology procedure consists of the following:
\begin{data}
\label{data:table}
The plain-text files \texttt{rank.csv} and \texttt{names.json} in the \texttt{data/} directory of~\cite{ehpreprint_data} contain
\begin{enumerate}
  \item The dimension of $H^{s, f}\Lambda(n)$ for all $s + f \leq 76$
  \item For each basis element, the lexicographically leading admissible monomial of a cycle representative
\end{enumerate}
The full cycle representatives, together with the minimal set of differentials in the algebraic EHP spectral sequence used to reduce them, are regenerated on demand by our released implementation; at the full range they are too large to ship.
\end{data}
\noindent To operate on a class in $H^{s, f}\Lambda(n)$ we first lift its cycle representative to $\Lambda(n)$ and then operate on it, in our case either by some concatenation product or by the $\mathrm{H}$ map in \Cref{prop:ehp}. We then take the resulting polynomial in $\Lambda(m)$ and systematically add boundaries until we have expressed it as a linear combination of cycle representatives in the homology of $\Lambda(m)$. This approach has been used by Tangora to compute some products and Massey products in the classical Adams $E_2$-page~\cite{Tangora_1993, Tangora_1994}. We use this method to compute the following:
\begin{data}
  \label{data:ehp-comp}
  The plain-text table files \texttt{E.csv}, \texttt{H.csv}, \texttt{P.csv}, and \texttt{compositions.csv} in the \texttt{data/} directory of~\cite{ehpreprint_data} contain a full description through total degree 76 of the values of the maps in \Cref{prop:ehp} and \Cref{prop:leibniz} on every $\mathbb{F}_2$-linear basis element from \Cref{data:table}.
\end{data}

\subsection{The fiber of the double suspension}
Write $e_m \Lambda$ for a copy of $\Lambda$ shifted by a generator $e_m$ in degree $m$, so that $\mathbb{F}_2\{e_m\} = \Sigma^m \mathbb{F}_2$. The $E_2$-page of the Adams spectral sequence for $\Sigma^{2n - 1}\mathbb{S}/2$
can be described by taking the homology of $e_{2n} \Lambda \oplus e_{2n -
  1} \Lambda $ with differential inherited from $\Lambda$ along with $d(e_{2n}) = e_{2n - 1} \lambda_0$~\cite{Priddy_1970}. The following result is implicit in the cited papers, but for completeness we provide a proof sketch along with all relevant references.
\begin{theorem}[\cite{Mahowald_1975, Mahowald_1982, Mahowald_Thompson_1994}]
\label{thm:mah}
For each $n$ there exists the following:
\begin{enumerate}
\item a chain map
\begin{equation*}
\begin{aligned}
  \Lambda(2n + 1) \to e_{2n} \Lambda \oplus e_{2n - 1} \Lambda \qquad\qquad
  \lambda_{2n - 1}\,\lambda_I &\mapsto e_{2n - 1}\,\lambda_I \\
  \lambda_{2n}\,\lambda_I &\mapsto e_{2n}\,\lambda_I + \epsilon\, e_{2n - 1}\,\lambda_{4n + 1}\,\lambda_{I'},
\end{aligned}
\end{equation*}
where $\epsilon = 1$ if $\lambda_I = \lambda_{4n}\,\lambda_{I'}$ and $\epsilon = 0$ otherwise, and where monomials beginning with $\lambda_i$ for $i < 2n - 1$ are sent to $0$;
\item a map $\Omega^{3} S^{2n + 1} \to \Omega^{\infty}\Sigma^{\infty}\Sigma^{4n - 4}\mathbb{RP}^2$ which can be covered by a map of unstable resolutions described on the $E_1$-page by the chain map in (1).
\end{enumerate}
\end{theorem}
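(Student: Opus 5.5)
The plan is to prove (1) by direct computation with the relations and differential in \eqref{eq:lambda}, and then obtain (2) by assembling the geometric results of Mahowald and Mahowald--Thompson.

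\textbf{Part (1).} Write $\phi$ for the proposed map. First I will check that $\phi$ is well defined on the admissible basis of $\Lambda(2n+1)$. Every basis monomial begins with $\lambda_i$ for some $i \le 2n$. Those with $i < 2n-1$ are sent to $0$, and the other two cases are given by the formula.

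I will then show that $\phi$ has a cleaner description. Let $\Lambda(2n+1) \to \Lambda(2n+1)/\Lambda(2n-1)$ be the quotient map. This quotient is the two-step filtration quotient of the algebraic EHP spectral sequence. The two layers are identified with $e_{2n-1}\Lambda(4n-1)$ and $e_{2n}\Lambda(4n+1)$ by the maps $\mathrm{H}$ of \Cref{prop:ehp}. So the claim is that $\phi$ is this quotient followed by an inclusion of the two-cell complex into $e_{2n}\Lambda \oplus e_{2n-1}\Lambda$.

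The $\epsilon$-correction comes from the fact that the attaching datum between the two layers is not just $\lambda_0$ in the naive basis. To verify the chain map property $d\phi = \phi d$, I will split by leading letter.

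\begin{itemize}
\item For $\lambda_{2n-1}\lambda_I$: expand $d(\lambda_{2n-1}\lambda_I) = d(\lambda_{2n-1})\lambda_I + \lambda_{2n-1}d(\lambda_I)$. Every term of $d(\lambda_{2n-1})$ begins with $\lambda_{j}$ for $j \le 2n-2$. After re-admissibilization these terms stay in $\Lambda(2n-1)$, because $\Lambda(m)$ is a subcomplex. Hence they map to $0$, and both sides reduce to $e_{2n-1}d(\lambda_I)$.
\item For $\lambda_{2n}\lambda_I$: here $d(\lambda_{2n})$ contains the term $\lambda_{2n-1}\lambda_0$ with coefficient $\binom{2n-1}{1}=1$. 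This term produces $e_{2n-1}\lambda_0\lambda_I$, matching $d(e_{2n}) = e_{2n-1}\lambda_0$. The remaining terms begin with lower letters and map to $0$.
\end{itemize}

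\textbf{Main obstacle.} The delicate step is the one where $\lambda_{2n}\lambda_I$ or $\lambda_{2n-1}d(\lambda_I)$ becomes inadmissible and must be rewritten using the Adem-type relation. This happens exactly when the second letter exceeds $4n$ (respectively $4n-2$). Rewriting can move a term from leading letter $\lambda_{2n}$ to leading letter $\lambda_{2n-1}$.

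I will track the relation $\lambda_{2n}\lambda_{4n+1+m} \sim \sum_j \binom{m-1-j}{j}\lambda_{2n+m-j}\lambda_{4n+1+j}$. The correction term $\epsilon\, e_{2n-1}\lambda_{4n+1}\lambda_{I'}$ should be exactly what is needed to absorb the discrepancy when $d(\lambda_I)$ has leading letter $\lambda_{4n}$.

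I would carry this out by a parity and binomial computation modulo $2$. The goal is to show that the only surviving cross-term is $e_{2n-1}\lambda_{4n+1}\lambda_{I'}$, arising from $\lambda_{2n}\lambda_{4n}\lambda_{I'}$ via $d(\lambda_{4n+1}) \ni \lambda_{4n}\lambda_0$. As a fallback I will check this formula by machine against the released implementation in low degrees.

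\textbf{Part (2).} I will cite Mahowald's construction of the map $\Omega^3 S^{2n+1} \to \Omega^\infty\Sigma^{\infty}\Sigma^{4n-4}\mathbb{RP}^2$. This map is the stable adjoint of the composite of the fiber of the double suspension $W(n)$ with the Snaith/James splitting. Its homology effect is the projection onto the bottom two cells, as in Mahowald's papers on the EHP sequence and Mahowald--Thompson.

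Next, both sides carry unstable Adams resolutions with $E_1$-pages $\Lambda(2n+1)$ and $e_{2n}\Lambda\oplus e_{2n-1}\Lambda$, respectively; the first via the simplicial lower central series model \cite{Bousfield_Curtis_1970,Curtis_1971}. Any map of spaces lifts to a map of Massey--Peterson towers, since the resolutions are built from injective unstable comodules. The induced $E_1$-map is then determined by its effect on primitives in the bottom filtration, which is the homology map.

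Finally, I will identify this induced map with the chain map of (1). Both maps agree on the indecomposable generators $\lambda_{2n-1}$ and $\lambda_{2n}$, and both are compatible with right multiplication by $\Lambda$ through the composition pairing of \Cref{prop:leibniz}. Together these two facts force the two maps to be equal.
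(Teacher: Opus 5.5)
Both parts have genuine gaps.

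In part (1), the bookkeeping is wrong at exactly the points that matter. The products $\lambda_{2n}d(\lambda_I)$ and $\lambda_{2n-1}d(\lambda_I)$ never need rewriting, because $\Lambda(4n+1)$ and $\Lambda(4n-1)$ are subcomplexes. The rewriting happens instead in $d(\lambda_{2n})\lambda_I$ and $d(\lambda_{2n-1})\lambda_I$. There the relation $\lambda_i\lambda_{2i+1+m}\sim\sum_j\binom{m-1-j}{j}\lambda_{i+m-j}\lambda_{2i+1+j}$ \emph{raises} the leading index; it does not lower it.

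This breaks several of your claims.
\begin{itemize}
\item Your claim that the terms of $d(\lambda_{2n})\lambda_I$ other than $\lambda_{2n-1}\lambda_0\lambda_I$ map to $0$ is false. For $n=2$, $\lambda_2\lambda_1\lambda_8=\lambda_3\lambda_5\lambda_3+\lambda_2\lambda_4\lambda_5$ contributes $e_3\lambda_5\lambda_3$ to $\phi(d(\lambda_4\lambda_8))$.
\item It is also not true that $\phi(\lambda_{2n-1}\lambda_0\lambda_I)=e_{2n-1}\lambda_0\lambda_I$. For $n=1$, $\lambda_1\lambda_0\lambda_4=\lambda_1\lambda_2\lambda_2$ because $\lambda_1\lambda_3=0$, while $e_1\lambda_0\lambda_4=e_1(\lambda_3\lambda_1+\lambda_2\lambda_2)$.
\item Your first bullet reaches the right conclusion, but the valid reason is that $\mathrm{H}$ is a chain map, not merely that $\Lambda(2n-1)$ is a subcomplex.
\item The correction term does not come from $\lambda_{4n}\lambda_0\in d(\lambda_{4n+1})$; that coefficient is $\binom{4n}{1}\equiv 0$. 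Rather, $e_{2n-1}d(\lambda_{4n+1})$ absorbs exactly the discrepancies above: $d(\lambda_5)=\lambda_3\lambda_1$ and $d(\lambda_9)=\lambda_7\lambda_1+\lambda_5\lambda_3$ in the two examples.
\end{itemize}
Checking $d\phi=\phi d$ for all $n$ and $I$ amounts to computing the algebraic EHP differential $\mathrm{H}\circ\mathrm{P}\colon\Lambda(4n+1)\to\Lambda(4n-1)$ in closed form. Equivalently, it is Mahowald's presentation of $\Lambda(2n+1)/\Lambda(2n-1)$ as $\operatorname{cof}_n(d_1)$ with $d(\kappa_1\lambda_{4n})=\kappa_0(\lambda_0\lambda_{4n}+d(\lambda_{4n+1}))$, followed by Mahowald's maps $\operatorname{cof}_n(d_1)\to\operatorname{cof}_{n+1}(d_1)\to\cdots$. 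The paper imports both from Mahowald. Your sketch gives no correct substitute, and machine checks in low degrees cannot cover all $n$.

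In part (2), the final identification fails outright. The map $\phi$ is \emph{not} compatible with right multiplication, since $\phi(\lambda_{2n}\lambda_{4n}\lambda_{I'})\neq\phi(\lambda_{2n})\lambda_{4n}\lambda_{I'}$. Worse, the right-multiplicative extension of $\lambda_{2n}\mapsto e_{2n}$, $\lambda_{2n-1}\mapsto e_{2n-1}$ is not a chain map at all: for $n=1$ it fails on $\lambda_2\lambda_4$ by $e_1\lambda_3\lambda_1$. So ``agreement on $\lambda_{2n-1},\lambda_{2n}$ plus multiplicativity'' would single out the wrong map.

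Your lifting and uniqueness steps are also only asserted. The $E_1$-page $\Lambda(2n+1)$ comes from looping the resolution of $S^{2n+1}$. Massey--Peterson lifting needs very nice cohomology, and uniqueness of the $E_1$-map needs a vanishing argument.

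The paper supplies all of this as follows.
\begin{itemize}
\item It passes through the fiber $\operatorname{fib}_n(\mathrm{E}^2)$ of the double suspension. Its Mahowald--Thompson resolution has $E_1$-page the cone $C_n\simeq\operatorname{cof}_n(d_1)$ and receives $\Omega^3S^{2n+1}$ via the inclusion of $\Lambda(2n+1)$.
\item Cohen's maps $\operatorname{fib}_n(\mathrm{E}^2)\to\Omega^4\operatorname{fib}_{n+1}(\mathrm{E}^2)$ have colimit $\Omega^\infty\Sigma^\infty\Sigma^{4n-4}\mathbb{RP}^2$.
\item The Harper--Miller/Thompson acyclicity range, together with Mahowald--Thompson's Propositions~6.3 and~6.6, shows that chain maps $\mathcal{A}_*\underline{\otimes}\operatorname{cof}_n(d_1)\to\mathcal{A}_*\underline{\otimes}\operatorname{cof}_{n+1}(d_1)$ are determined up to chain homotopy by their effect in degree $0$. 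Hence they agree with Mahowald's algebraic maps.
\end{itemize}
That rigidity statement, not multiplicativity, is what pins down the map, and it does so only up to chain homotopy.
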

\begin{proof}
\textit{Step 1.}
  Write $\operatorname{fib}_n(\mathrm{E}^2)$ for the fiber of the double suspension $S^{2n - 1} \xrightarrow{\mathrm{E}^2} \Omega^2 S^{2n + 1}$, and let $C_n$ denote the mapping cone of the chain-level double suspension $\mathrm{E}^2 \colon \Lambda(2n - 1) \to \Lambda(2n + 1)$ from \Cref{prop:ehp}, so that $C_n = \Lambda^{\ast, \ast + 1}(2n - 1) \oplus \Lambda^{\ast, \ast}(2n + 1)$ as bigraded modules. Following~\cite{Mahowald_Thompson_1987}, we build a resolution for $\operatorname{fib}_n(\mathrm{E}^2)$ whose associated algebraic resolution is $\mathcal{A}_* \underline{\otimes} \Sigma^{2n - 2} C_n$. This resolution is constructed so that the map $\Omega^3 S^{2n + 1} \to \operatorname{fib}_n(\mathrm{E}^2)$ is covered by a map of resolutions that induces the inclusion of the second summand
  \begin{equation}
    \label{eq:cone-map}
    \mathcal{A}_* \underline{\otimes} \Sigma^{2n - 2} \Lambda(2n + 1) \hookrightarrow \mathcal{A}_* \underline{\otimes} \Sigma^{2n - 2} C_n.
  \end{equation}

  Note that $C_n$ is chain homotopy equivalent to the quotient complex $\Lambda(2n + 1)/\Lambda(2n - 1)$, which Mahowald~\cite[Section~2]{Mahowald_1975} presents as
\begin{align*}
  \operatorname{cof}_n(d_1)
  &\coloneqq \kappa_1\, \Lambda(4n + 1) \oplus  \kappa_0\, \Lambda(4n - 1) ,
  &
  d(\kappa_1) &= \kappa_0\, \lambda_0,
  \\
  |\kappa_i| &= (2n - 1 + i,\, 1),
  &
  d(\kappa_1\, \lambda_{4n}) &= \kappa_0\bigl(\lambda_0 \lambda_{4n} + d(\lambda_{4n+1})\bigr),
\end{align*}
with differential otherwise inherited from $\Lambda(4n + 1)$ and $\Lambda(4n - 1)$; here $d(\lambda_{4n + 1})$ is given by \eqref{eq:lambda}. Our notation is meant to reflect that this construction is equal to the mapping cone for the algebraic EHP $d_1$-differential $\Lambda(4n + 1) \xrightarrow{\mathrm{H} \circ \mathrm{P}} \Lambda(4n - 1)$. Composing the inclusion \eqref{eq:cone-map} with this quotient map, we conclude that the map of resolutions covering $\Omega^3 S^{2n + 1} \to \operatorname{fib}_n(\mathrm{E}^2)$ induces the chain map
\begin{equation}
\label{eq:firstmap}
\begin{aligned}
  \Lambda(2n + 1) &\to \operatorname{cof}_n(d_1),
  & \lambda_{2n}\,\alpha &\mapsto \kappa_1\,\alpha,
  & \lambda_{2n - 1}\,\alpha &\mapsto \kappa_0\,\alpha.
\end{aligned}
\end{equation}

\textit{Step 2.}
Cohen showed that there is a map $\operatorname{fib}_n(\mathrm{E}^2) \to \Omega^4\operatorname{fib}_{n+1}(\mathrm{E}^2)$ that has degree $1$ on the bottom cell and such that the colimit of the sequence of maps
    \begin{align*}
      \operatorname{fib}_n(\mathrm{E}^2) \to \Omega^4 \operatorname{fib}_{n+1}(\mathrm{E}^2) \to \Omega^8 \operatorname{fib}_{n+2}(\mathrm{E}^2) \to \cdots
    \end{align*}
    is homotopy equivalent to $\Omega^{\infty}\Sigma^{\infty}\Sigma^{4n - 4}\mathbb{RP}^2$~\cite[Propositions~1.3 and~1.4]{Cohen_1983}. In algebra we have the following commutative diagram of exact sequences
\[\begin{tikzcd}[row sep=small, column sep=small]
	  &                  & 0                & 0                            &   \\
	0 & \Lambda(2n - 1)  & \Lambda(2n)      & \Lambda(4n - 1)              & 0 \\
	0 & \Lambda(2n - 1)  & \Lambda(2n + 1)  & \operatorname{cof}_n(d_1)    & 0 \\
	  &                  & \Lambda(4n + 1)  & \Lambda(4n + 1)              &   \\
	  &                  & 0                & 0                            &
	\arrow[from=2-1, to=2-2] \arrow[from=2-2, to=2-3] \arrow[from=2-3, to=2-4] \arrow[from=2-4, to=2-5]
	\arrow[from=3-1, to=3-2] \arrow[from=3-2, to=3-3] \arrow[from=3-3, to=3-4] \arrow[from=3-4, to=3-5]
	\arrow[from=1-3, to=2-3] \arrow[from=2-3, to=3-3] \arrow[from=3-3, to=4-3] \arrow[from=4-3, to=5-3]
	\arrow[from=1-4, to=2-4] \arrow[from=2-4, to=3-4] \arrow[from=3-4, to=4-4] \arrow[from=4-4, to=5-4]
	\arrow[equals, from=2-2, to=3-2] \arrow[equals, from=4-3, to=4-4]
\end{tikzcd}\]
which shows that the range where $\mathcal{A}_* \underline{\otimes} \operatorname{cof}_n(d_1)$ is acyclic is equal to the acyclicity range of $\mathcal{A}_* \underline{\otimes} \Lambda(4n - 1)$. The main theorem of Harper--Miller~\cite[Theorem~3.9]{Harper_Miller_1989} states that for a simply connected space of finite type with \emph{very nice} cohomology (one whose $\mathbb{F}_2$-cohomology is a free unstable algebra $U(M)$ on an unstable module $M$ over the Steenrod algebra), this acyclicity range is equal to the range in which certain structure maps in the associated unstable Adams resolution are surjective on cohomology. This result was generalized to the above resolution for $\operatorname{fib}_n(\mathrm{E}^2)$ in~\cite[Proposition~2.26]{Thompson_1990} (stated there for odd primes; the proof carries over to $p = 2$). Proposition 6.3 and the lambda-algebra estimates from Proposition 6.6 in~\cite{Mahowald_Thompson_1994} show that any chain map $\mathcal{A}_* \underline{\otimes} \operatorname{cof}_n(d_1) \to \mathcal{A}_* \underline{\otimes} \operatorname{cof}_{n+1}(d_1)$ is uniquely determined up to chain homotopy by its effect in homological degree $0$. In particular, since the map $\operatorname{fib}_n(\mathrm{E}^2) \to \Omega^4\operatorname{fib}_{n+1}(\mathrm{E}^2)$ is an isomorphism on $H^{4n - 3}(-;\mathbb{Z})$, the induced map on resolutions is chain homotopic to the chain map $\mathcal{A}_* \underline{\otimes} \operatorname{cof}_n(d_1) \to \mathcal{A}_* \underline{\otimes} \operatorname{cof}_{n+1}(d_1)$ induced by the maps 
\begin{equation}
\label{eq:secondmap}
\begin{aligned}
  \operatorname{cof}_n(d_1) \to  \operatorname{cof}_{n+1}(d_1) \to  \operatorname{cof}_{n+2}(d_1) \to \cdots
\end{aligned}
\end{equation}
from~\cite[Definition~3.2]{Mahowald_1975}. Composing the map in \eqref{eq:firstmap} with the maps in \eqref{eq:secondmap} gives the result stated in the theorem.

\end{proof}
\begin{data}
  \label{data:C2-comp}
  The plain-text table file \texttt{C2.csv} in the \texttt{data/} directory of~\cite{ehpreprint_data} describes all values of the maps from \Cref{thm:mah} on $E_2$-pages through total degree 76.
\end{data}
\noindent In practice, \Cref{thm:mah} and \Cref{data:C2-comp} give us a map of spectral sequences
\begin{equation}
\label{eq:der}
\begin{aligned}
  E_r^{s, f}(S^{2n + 1}) \xrightarrow{\;\mathrm{s}_{2n+1}\;} E_r^{s - 2n + 1, f - 1}(\mathbb{S}/2)
\end{aligned}
\end{equation}
which we use to prove unstable Adams differentials using the stable Adams
spectral sequence for $\mathbb{S}/2$. For $n = 1$ this map was the central tool in~\cite{Curtis_Mahowald_1989}.
\section{The computation of unstable Adams differentials}
\label{sec:ehp}

\noindent In this section we describe our algorithm for exhaustively applying \Cref{prop:ehp} and \Cref{prop:leibniz} to the propagation of unstable Adams differentials. Given the degree shifts in \Cref{prop:ehp} and \Cref{prop:leibniz}, it helps to consider the unstable Adams spectral sequence for all spheres as a single trigraded spectral sequence where we denote the $\mathbb{F}_2$-vector space $E_r^{s, f}(S^n)$ by $\mathcal{U}_r^{n, s, f}$. Our strategy begins with the observation that stabilization and \Cref{thm:mah} give us maps of spectral sequences
\begin{equation}
  \label{eq:stable}
  \mathcal{U}_r^{n, s, f} \xrightarrow{\;\mathrm{E}_n^\infty\;} E_r^{s, f}(\mathbb{S}) \qquad \qquad \qquad \qquad \mathcal{U}_r^{2n + 1, s, f} \xrightarrow{\;\mathrm{s}_{2n+1}\;} E_r^{s - 2n + 1, f - 1}(\mathbb{S}/2),
\end{equation}
where the targets are stable Adams spectral sequences and $\mathrm{s}_{2n + 1}$ is the map \eqref{eq:der}. The Adams differentials for the sphere spectrum and the cofiber of 2 have been well studied and are completely known at least through the 80-stem~\cite{Isaksen_Wang_Xu_2023, Lin_Wang_Xu_2024, Lin_Wang_Xu_2025}. The fact that these maps commute with differentials provides the initial unstable Adams differentials, which we then propagate using \Cref{prop:ehp} and \Cref{prop:leibniz}.  
\subsection{Affine linear algebra}Our method for propagating unstable Adams differentials using naturality with respect to EHP maps and the unstable Leibniz rule is based on the strategy for stable Adams $d_2$-differentials presented in~\cite{Beauvais_Feisthauer_2022}. Throughout this section we write $\mathbf{d}_r$ for the matrix representing the differential $d_r$ on the tridegree in question. Recall that we can express the space of possible values of $\mathbf{d}_r$ as an affine subspace $\boldsymbol{\delta}_r + \mathcal{I}_r$, where the \emph{offset} $\boldsymbol{\delta}_r \in \operatorname{Hom}(\mathcal{U}_r^{n,s,f}, \mathcal{U}_r^{n,s-1,f+r})$ is a chosen matrix representative and the \emph{indeterminacy} $\mathcal{I}_r \subseteq \operatorname{Hom}(\mathcal{U}_r^{n,s,f}, \mathcal{U}_r^{n,s-1,f+r})$ is a linear subspace.  
\begin{remark*}
  Typically the Leibniz rule is used to take information about a differential $d_r(x)$ and a product $x \circ y$ and deduce something about the differential $d_r(x \circ y)$. Our implementation instead considers the set of all possible matrix representatives for $d_r$ on some tridegree and narrows down our set of possible differentials by excluding matrices for which certain squares fail to commute. The following description in terms of affine linear algebra is meant to make this idea precise.
\end{remark*}
We use the affine linear algebra library in SageMath (version 10.4)~\cite{SageMath} to translate commutative
diagrams involving $\mathbf{d}_r$ into linear constraints on its space of
possible values by computing intersections of affine spaces and taking images
and preimages under fixed linear maps. For each tridegree
$\mathcal{U}_r^{n,s,f}$ we initialize its affine subspace of possible
differentials $\mathcal{D}_r^{n,s,f}$ as $\mathbf{0} +
\operatorname{Hom}_{\mathbb{F}_2}(\mathcal{U}_r^{n,s,f}, \mathcal{U}_r^{n,s-1,f+r})$. For a fixed linear map $\mathrm{F}$ and affine subspace $\mathcal{D}$, define the pushforward and pullback operators
\begin{align*}
  \mathrm{F}_*(\mathcal{D}) &= \{\,\mathrm{F}\circ\mathbf{m} : \mathbf{m} \in \mathcal{D}\,\},
  &
  \mathrm{F}^*(\mathcal{D}) &= \{\,\mathbf{m}\circ\mathrm{F} : \mathbf{m} \in \mathcal{D}\,\}.
\end{align*}

\begin{proposition}
Let $\mathrm{F}$ be a linear map on $\mathcal{U}_r^{\ast,\ast,\ast}$ and denote
its target tridegree on $\mathcal{U}_r^{n,s,f}$ by $\mathrm{F}(n,s,f)$.
Naturality of $\mathrm{F}$ with respect to $\mathbf{d}_r$ is the equation
$\mathrm{F}\circ\mathbf{d}_r = \mathbf{d}_r\circ\mathrm{F}.$ On affine subspaces, naturality with indeterminacy becomes:
\[
\mathbf{d}_r\!\in\!\mathcal{D}_r^{n,s,f}
\;\Rightarrow\;
\mathbf{d}_r\!\in\!(\mathrm{F}^*)^{-1}\!\big(\mathrm{F}_*(\mathcal{D}_r^{n,s,f})\big),
\qquad
\mathbf{d}_r\!\in\!\mathcal{D}_r^{\mathrm{F}(n,s,f)}
\;\Rightarrow\;
\mathbf{d}_r\!\in\!\mathrm{F}_*^{-1}\!\big(\mathrm{F}^*(\mathcal{D}_r^{\mathrm{F}(n,s,f)})\big).
\]
In each implication the hypothesis concerns the differential on the tridegree appearing in its superscript, and the conclusion constrains the differential on the other tridegree.
\end{proposition}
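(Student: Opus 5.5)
The argument is elementary linear algebra: unwind the definitions of the pushforward and pullback operators. We keep careful track of which tridegree each matrix lives on.

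Fix the source tridegree $(n,s,f)$ and write $\mathbf{d}_r$ for the true differential on it. Write $\mathbf{d}_r'$ for the true differential on $\mathrm{F}(n,s,f)$. Let $\mathrm{F}$ also denote the induced map $\mathcal{U}_r^{n,s-1,f+r} \to \mathcal{U}_r^{\mathrm{F}(n,s-1,f+r)}$ on targets. Naturality, which holds for $\mathrm{E}$, $\mathrm{H}$, $\mathrm{P}$ by \Cref{prop:ehp}, for $\mathrm{E}_n^\infty$ and $\mathrm{s}_{2n+1}$ by \eqref{eq:stable}, and for composition with a fixed class $\beta$ by \Cref{prop:leibniz}, then reads
\[
\mathrm{F}\circ\mathbf{d}_r = \mathbf{d}_r'\circ\mathrm{F}.
\]
In the Leibniz case, naturality holds once the known term $\alpha\circ d_r(\mathrm{E}\beta)$ is absorbed into the offset; this only translates the affine spaces and does not change the argument.

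\emph{First implication.} Suppose $\mathbf{d}_r \in \mathcal{D}_r^{n,s,f}$. Then $\mathrm{F}\circ\mathbf{d}_r \in \mathrm{F}_*(\mathcal{D}_r^{n,s,f})$ by the definition of $\mathrm{F}_*$. By naturality this element equals $\mathbf{d}_r'\circ\mathrm{F} = \mathrm{F}^*(\mathbf{d}_r')$. Hence $\mathbf{d}_r' \in (\mathrm{F}^*)^{-1}\big(\mathrm{F}_*(\mathcal{D}_r^{n,s,f})\big)$, which is the asserted constraint on the differential at the other tridegree $\mathrm{F}(n,s,f)$.

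\emph{Second implication.} Suppose $\mathbf{d}_r' \in \mathcal{D}_r^{\mathrm{F}(n,s,f)}$. Then $\mathbf{d}_r'\circ\mathrm{F} \in \mathrm{F}^*(\mathcal{D}_r^{\mathrm{F}(n,s,f)})$. By naturality this element equals $\mathrm{F}_*(\mathbf{d}_r)$, so $\mathbf{d}_r \in \mathrm{F}_*^{-1}\big(\mathrm{F}^*(\mathcal{D}_r^{\mathrm{F}(n,s,f)})\big)$.

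We then verify that every set appearing is again affine, so the procedure stays inside affine linear algebra. This holds because $\mathrm{F}_*$ and $\mathrm{F}^*$ are linear maps between finite-dimensional $\operatorname{Hom}$-spaces over $\mathbb{F}_2$. Images and preimages of affine subspaces under linear maps are affine (possibly empty); a preimage is either empty or a coset of the preimage of the indeterminacy. Concretely, $\mathrm{F}_*(\boldsymbol{\delta}+\mathcal{I}) = \mathrm{F}\boldsymbol{\delta} + \mathrm{F}_*\mathcal{I}$, and the preimage is computed by solving one linear system. Intersecting the result with the existing $\mathcal{D}_r$ is therefore a sound update.

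The only real obstacle is bookkeeping on the $E_r$-page rather than the $E_2$-page. Here $\mathcal{U}_r$ is a subquotient, so we must check that $\mathrm{F}$ descends to $E_r$ as a map of spectral sequences and that the matrices are taken with respect to fixed bases of those subquotients. That $\mathrm{F}$ descends follows by induction on $r$ from the fact that $\mathrm{F}$ commutes with all earlier differentials. Once it descends, the argument above applies verbatim page by page.
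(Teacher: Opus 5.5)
Your argument is correct and is essentially the paper's proof: the paper simply defers to Beauvais--Feisthauer, where both implications come from applying $\mathrm{F}_*$ (resp.\ $\mathrm{F}^*$) to the assumed containment and rewriting via the naturality equation $\mathrm{F}\circ\mathbf{d}_r=\mathbf{d}_r'\circ\mathrm{F}$, exactly as you do. Your extra remarks on affineness of images and preimages and on $\mathrm{F}$ descending to $E_r$ are sound but go beyond what the statement needs; the Leibniz ``absorption'' remark, which applies only once $d_r(\mathrm{E}\beta)$ is already determined, is likewise unnecessary.
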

\begin{proof}
  The proof is the same as in~\cite{Beauvais_Feisthauer_2022}.
\end{proof}
Here and throughout, the arrow $\update$ denotes algorithmic refinement: the space of possible differentials named on the left is replaced, in place, by its intersection with the space on the right. We therefore refine $\mathcal{D}_r^{\ast,\ast,\ast}$ by intersecting with the subspace of maps respecting naturality:
\begin{align}
  \label{eq:nat-const}
  \mathcal{D}_r^{n,s,f}
  &\update
  \mathrm{F}_*^{-1}\!\big(\mathrm{F}^*(\mathcal{D}_r^{\mathrm{F}(n,s,f)})\big)
  &&
  \mathcal{D}_r^{\mathrm{F}(n,s,f)}
  \update
  (\mathrm{F}^*)^{-1}\!\big(\mathrm{F}_*(\mathcal{D}_r^{n,s,f})\big).
\end{align}

The goal is to intersect $\mathcal{D}_r^{n,s,f}$ with constraints until there is no indeterminacy:
\[
\mathcal{D}_r^{n,s,f} = \mathbf{d}_r + \{0\}.
\]
\subsection{Naturality of EHP}
In this subsection, we sketch our strategy for exhaustively applying the naturality result in \Cref{prop:ehp}. This result implies that the following diagrams commute:

\begin{center}
\begin{tikzcd}[
    ampersand replacement=\&,
    every label/.append style={font=\small},
    every arrow/.append style={thick},
    row sep=2em, column sep=2.5em,
    cells={nodes={font=\small}}
]
    \mathcal{U}_r^{n,s,f} \&
    \mathcal{U}_r^{n+1,s,f} \\
    \mathcal{U}_r^{n,s-1,f+r} \&
    \mathcal{U}_r^{n+1,s-1,f+r}
    \arrow["\mathrm{E}", from=1-1, to=1-2]
    \arrow["\mathbf{d}_r"', from=1-1, to=2-1]
    \arrow["\mathbf{d}_r", from=1-2, to=2-2]
    \arrow["\mathrm{E}"', from=2-1, to=2-2]
\end{tikzcd}
\quad
\begin{tikzcd}[
    ampersand replacement=\&,
    every label/.append style={font=\small},
    every arrow/.append style={thick},
    row sep=2em, column sep=2.5em,
    cells={nodes={font=\small}}
]
    \mathcal{U}_r^{n,s,f} \&
    \mathcal{U}_r^{2n-1,s-n+1,f-1} \\
    \mathcal{U}_r^{n,s-1,f+r} \&
    \mathcal{U}_r^{2n-1,s-n,f+r-1}
    \arrow["\mathrm{H}", from=1-1, to=1-2]
    \arrow["\mathbf{d}_r"', from=1-1, to=2-1]
    \arrow["\mathbf{d}_r", from=1-2, to=2-2]
    \arrow["\mathrm{H}"', from=2-1, to=2-2]
\end{tikzcd}
\quad
\begin{tikzcd}[
    ampersand replacement=\&,
    every label/.append style={font=\small},
    every arrow/.append style={thick},
    row sep=2em, column sep=2.5em,
    cells={nodes={font=\small}}
]
    \mathcal{U}_r^{2n+1,s,f} \&
    \mathcal{U}_r^{n,s+n-1,f+2} \\
    \mathcal{U}_r^{2n+1,s-1,f+r} \&
    \mathcal{U}_r^{n,s+n-2,f+r+2}
    \arrow["\mathrm{P}", from=1-1, to=1-2]
    \arrow["\mathbf{d}_r"', from=1-1, to=2-1]
    \arrow["\mathbf{d}_r", from=1-2, to=2-2]
    \arrow["\mathrm{P}"', from=2-1, to=2-2]
\end{tikzcd}
\end{center}

\clearpage

%

\begingroup
\newcommand{\legendscale}{1}%
\setlength{\intextsep}{0.1cm}%
\setlength{\textfloatsep}{0.1cm}%
\setlength{\abovecaptionskip}{3pt}%
\setlength{\belowcaptionskip}{0pt}%

\begin{figure}[H]
\centering
\scalebox{\legendscale}{%
\begin{tikzpicture}[>=latex, line join=bevel]
\node (leafpanel) [panel] at (0,0) {
  \begin{tikzpicture}
    \node[inner sep=0] at (-7.75,0) {}; \node[inner sep=0] at (7.75,0) {};
    \node[font=\bfseries] at (0,2) {Terminal Nodes};

    \node (gray) at (-5,1) [ctpnode, fill=CtpOverlay2] {$\mathbf{(n,s,f)}$};
    \node (peach1) at (0,1) [ctpnode, fill=CtpPeach] {$\mathbf{X(s,f)}$};

    \node (red) at (5,1) [ctpnode, fill=CtpRed] {$\mathbf{(n,s,f)}$};
    \node[description] at (-5,-0.2) {$\mathcal{D}_r^{n,s,f} = \textcolor{CtpPeach}{\mathbf{0}} + \textcolor{CtpGreen}{\operatorname{Hom}(\mathcal{U}_r^{n,s,f}, \mathcal{U}_r^{n,s-1,f+r})}$};
    \node[description-small] at (5,-0.2) {Trivial target};
    \node[description-small] at (0,-0.2) {Stable input};
  \end{tikzpicture}
};
\end{tikzpicture}}
\caption{}\label{fig:legend-terminal}
\end{figure}

\vfill

\begin{figure}[H]
\centering
\scalebox{\legendscale}{%
\begin{tikzpicture}[>=latex, line join=bevel]
\node (innerpanel) [panel] at (0,0) {
  \begin{tikzpicture}
    \node[inner sep=0] at (-7.75,0) {}; \node[inner sep=0] at (7.75,0) {};
    \node[font=\bfseries] at (0,2) {Internal Nodes};

    \node (blue) at (-5,1) [ctpnode, fill=CtpSky] {$\mathbf{(n,s,f)}$};
    \node (green) at (0,1) [ctpnode, fill=CtpGreen] {$\mathbf{(n,s,f)}$};
    \node (red2) at (5,1) [ctpnode, fill=CtpRed] {$\mathbf{(n,s,f)}$};

    \node[description-wide] at (-5,-0.2) {$\{0\} < \textcolor{CtpGreen}{\mathcal{I}_r^{n, s, f}} < \operatorname{Hom}\big(\mathcal{U}_r^{n, s, f}, \mathcal{U}_r^{n, s - 1, f + r}\big)$};
    \node[description-small] at (0,-0.2) {$\mathcal{D}_r^{n, s, f} = \textcolor{CtpPeach}{\mathbf{d}_r} + \textcolor{CtpGreen}{\{0\}}$};
    \node[description-small] at (5,-0.2) {$\mathcal{D}_r^{n, s, f} = \textcolor{CtpPeach}{\mathbf{0}} + \textcolor{CtpGreen}{\{0\}}$};
  \end{tikzpicture}
};
\end{tikzpicture}}
\caption{}\label{fig:legend-internal}
\end{figure}

\vfill

\begin{figure}[H]
\centering
\scalebox{\legendscale}{%
\begin{tikzpicture}[>=latex, line join=bevel]
\node (arrowpanel) [panel] at (0,0) {
  \begin{tikzpicture}
    \node[inner sep=0] at (-7.75,0) {}; \node[inner sep=0] at (7.75,0) {};
    \node[font=\bfseries] at (0,2.8) {Maps};

    \begin{scope}[shift={(-4,0)}]
      \node (top1) at (0,2) [ctpnode, fill=CtpBase, text=CtpText] {$\mathbf{(n,s,f)}$};
      \node (left1) at (-1.5,1) [ctpnode, fill=CtpBase, text=CtpText] {$\mathbf{F(n,s,f)}$};
      \node (right1) at (1.5,1) [ctpnode, fill=CtpBase, text=CtpText] {$\mathbf{(n,s,f)}$};

      \draw [CtpText,->, very thick] (top1) to[out=225,in=90]
        node [above left,CtpText,inner sep=2pt] {$\mathbf{F}$} (left1);
      \draw [CtpText, very thick] (top1) to[out=315,in=90] (right1);
    \end{scope}

    \begin{scope}[shift={(4,0)}]
      \node (top2) at (0,2) [ctpnode, fill=CtpBase, text=CtpText] {$\mathbf{F(n,s,f)}$};
      \node (left2) at (-1.5,1) [ctpnode, fill=CtpBase, text=CtpText] {$\mathbf{(n, s, f)}$};
      \node (right2) at (1.5,1) [ctpnode, fill=CtpBase, text=CtpText] {$\mathbf{F(n,s,f)}$};

      \draw [CtpText,->, very thick] (left2) to[out=90,in=225]
        node [above left,CtpText,inner sep=2pt] {$\mathbf{F}$} (top2);
      \draw [CtpText, very thick] (top2) to[out=315,in=90] (right2);
    \end{scope}

    \node[description-leibniz] at (-4,-0.5) {$\mathcal{D}_r^{n, s, f} \update \mathrm{F}_*^{-1}\!\big(\mathrm{F}^*(\mathcal{D}_r^{\mathrm{F}(n, s, f)})\big)$};
    \node[description-leibniz] at (4,-0.5) {$\mathcal{D}_r^{\mathrm{F}(n,s,f)} \update (\mathrm{F}^*)^{-1}\!\big(\mathrm{F}_*(\mathcal{D}_r^{n, s, f})\big)$};
  \end{tikzpicture}
};
\end{tikzpicture}}
\caption{}\label{fig:legend-maps}
\end{figure}

\vfill

\begin{figure}[H]
\centering
\scalebox{\legendscale}{%
\begin{tikzpicture}[>=latex, line join=bevel]
\node (leibnizpanel) [panel] at (0,0) {
  \begin{tikzpicture}
    \node[inner sep=0] at (-7.75,0) {}; \node[inner sep=0] at (7.75,0) {};
    \node[font=\bfseries] at (0,3.0) {Products};

    \begin{scope}[shift={(-4.0,0)}]
      \node (ltop1) at (0,2.2) [ctpnode, fill=CtpBase, text=CtpText] {$\mathbf{(n,s,f)}$};
      \node (lleft1) at (-2.2,1.2) [ctpnode, fill=CtpBase, text=CtpText] {$\mathbf{(n,s,f)}$};
      \node (lmid1) at (0,1.2) [ctpnode, fill=CtpBase, text=CtpText] {$\mathbf{(n{+}s,s',f')}$};
      \node (lright1) at (2.3,1.2) [ctpnode, fill=CtpBase, text=CtpText] {$\mathbf{(n,s{+}s',f{+}f')}$};

      \draw [CtpText, very thick] (ltop1) to[out=270,in=90] (lleft1);
      \draw [CtpText, very thick] (ltop1) to[out=270,in=90] (lmid1);
      \draw [CtpText, very thick] (ltop1) to[out=270,in=90] (lright1);

      \node[description-leibniz] at (0,0.1) {$\mathcal{D}_r^{n,s,f} \update (R_y)_*^{-1}\!\big(R_{\mathrm{E}y}^*(\mathcal{D}_r^{n, s + s', f + f'})\big)$};
    \end{scope}

    \begin{scope}[shift={(4.0,0)}]
      \node (ltop2) at (0,2.2) [ctpnode, fill=CtpBase, text=CtpText] {$\mathbf{(n{+}s,s',f')}$};
      \node (lleft2) at (-2.2,1.2) [ctpnode, fill=CtpBase, text=CtpText] {$\mathbf{(n,s,f)}$};
      \node (lmid2) at (0,1.2) [ctpnode, fill=CtpBase, text=CtpText] {$\mathbf{(n{+}s,s',f')}$};
      \node (lright2) at (2.3,1.2) [ctpnode, fill=CtpBase, text=CtpText] {$\mathbf{(n,s{+}s',f{+}f')}$};

      \draw [CtpText, very thick] (ltop2) to[out=270,in=90] (lleft2);
      \draw [CtpText, very thick] (ltop2) to[out=270,in=90] (lmid2);
      \draw [CtpText, very thick] (ltop2) to[out=270,in=90] (lright2);

      \node[description-leibniz] at (0,0.1) {$\mathcal{D}_r^{n+s,s',f'} \update \bigl((L_x)_* \circ \mathrm{E}^*\bigr)^{-1}\!\big((L_x \circ \mathrm{E})^*(\mathcal{D}_r^{n, s + s', f + f'})\big)$};
    \end{scope}

    \begin{scope}[shift={(0,-3.3)}]
      \node (ltop3) at (0,2.2) [ctpnode, fill=CtpBase, text=CtpText] {$\mathbf{(n,s{+}s',f{+}f')}$};
      \node (lleft3) at (-2.2,1.2) [ctpnode, fill=CtpBase, text=CtpText] {$\mathbf{(n,s,f)}$};
      \node (lmid3) at (0,1.2) [ctpnode, fill=CtpBase, text=CtpText] {$\mathbf{(n{+}s,s',f')}$};
      \node (lright3) at (2.3,1.2) [ctpnode, fill=CtpBase, text=CtpText] {$\mathbf{(n,s{+}s',f{+}f')}$};

      \draw [CtpText, very thick] (ltop3) to[out=270,in=90] (lleft3);
      \draw [CtpText, very thick] (ltop3) to[out=270,in=90] (lmid3);
      \draw [CtpText, very thick] (ltop3) to[out=270,in=90] (lright3);

      \node[description, text width=12.2cm] at (0,0.1) {$\mathcal{D}_r^{n, s+s', f+f'} \update \bigl((M \circ (\mathrm{id} \otimes \mathrm{E}))^*\bigr)^{-1}\big(M_*(\mathcal{D}_r^{n, s, f} \otimes \mathrm{id}) + M_*(\mathrm{id} \otimes \mathrm{E}^*(\mathcal{D}_r^{n + s, s', f'}))\big)$};
    \end{scope}
  \end{tikzpicture}
};
\end{tikzpicture}}
\caption{}\label{fig:legend-products}
\end{figure}

\endgroup

\clearpage

\noindent We can constrain the possible values of a differential on some unstable tridegree by iteratively applying \eqref{eq:nat-const} where $\mathrm{F}$ is the matrix form of the maps $\mathrm{E}$, $\mathrm{H}$, and $\mathrm{P}$ computed in \Cref{data:ehp-comp}. With no additional outside information, this procedure can only compute differentials that are zero. To compute nontrivial unstable Adams differentials,
we assume knowledge of all differentials in the Adams spectral sequences for $\mathbb{S}$ and $\mathbb{S}/2$ through the 80-stem~\cite{Isaksen_Wang_Xu_2023, Lin_Wang_Xu_2024, Lin_Wang_Xu_2025} and lift their differentials to the unstable Adams spectral sequence via the maps in \eqref{eq:stable}, the second of which is computed in \Cref{data:C2-comp}. We present these deductions as flow charts, whose legend is given in \Crefrange{fig:legend-terminal}{fig:legend-products}. The flow chart for any computed differential can be regenerated from the released data with the \texttt{why.py} script in~\cite{ehpreprint_data}.

\begin{algorithm}[htbp]
\caption{How a differential's proof is recorded as a flow chart.
\textsc{Propagate} shrinks the space $\mathcal{D}_r^{n,s,f}$ of still-possible
values of $\mathbf{d}_r$ at every tridegree by intersecting it with the
naturality and Leibniz constraints, \emph{recording} each refinement together
with the neighboring tridegrees it used. \textsc{Why} then traces these records
backward from the target differential to the assumed inputs.}
\label{alg:flowchart}
\begin{algorithmic}[1]
\Procedure{Propagate}{$r$}
  \State $\mathcal{D}_r^{n,s,f}\gets\operatorname{Hom}\!\big(\mathcal{U}_r^{n,s,f},\mathcal{U}_r^{n,s-1,f+r}\big)$ for all $(n,s,f)$ \Comment{every value still possible}
  \State restrict $\mathcal{D}_r^{n,s,f}$ to the known differential at each assumed input \Comment{stable $\mathbb{S}$ and $\mathbb{S}/2$}
  \Repeat
    \ForAll{maps $\mathrm{F}\in\{\mathrm{E},\mathrm{H},\mathrm{P}\}$ and tridegrees $(n,s,f)$}\Comment{naturality, \Cref{prop:ehp}}
      \State $\mathcal{D}_r^{n,s,f}\update\{\,\text{maps making the $\mathrm{F}$-naturality square commute}\,\}$
      \State \textbf{if} this shrank $\mathcal{D}_r^{n,s,f}$\textbf{,} record the step $\big(\mathrm{F};\ \mathrm{F}(n,s,f)\big)$ at $(n,s,f)$
    \EndFor
    \ForAll{products $x\!\cdot\! y$ landing in $(n,s{+}s',f{+}f')$}\Comment{Leibniz, \Cref{prop:leibniz}}
      \State refine $\mathcal{D}_r^{n,s{+}s',f{+}f'}$ to force $\mathbf{d}_r(xy)=\mathbf{d}_r(x)\,y+x\,\mathbf{d}_r(y)$
      \State \textbf{if} this shrank it\textbf{,} record the step $\big(x\!\cdot\! y;\ x,\ y\big)$ there
    \EndFor
  \Until{no $\mathcal{D}_r^{n,s,f}$ changes}
\EndProcedure
\Statex
\Function{Why}{$w$}\Comment{$w=(n,s,f)$; call on the target to be proved}
  \State $\sigma\gets$ the most recent step recorded at $w$ \Comment{empty if $w$ was never refined}
  \State create node $N_w$, colored by the state of $\mathcal{D}_r^{w}$ at $\sigma$ \Comment{\Cref{fig:legend-internal}}
  \If{$\sigma$ is empty \textbf{ or } $w$ is an assumed input}
    \State \Return $N_w$ \Comment{terminal: a stable/$\mathbb{S}/2$ input, or an already-forced group}
  \ElsIf{$\sigma$ is a naturality step for a map $\mathrm{F}$}
    \State attach \Call{Why}{$\mathrm{F}(w)$} to $N_w$ on an edge labeled $\mathrm{F}$
  \Else{ \, $\sigma$ is the Leibniz step for a product $x\!\cdot\! y$}
    \State attach \Call{Why}{$x$} and \Call{Why}{$y$} to $N_w$
  \EndIf
  \State \Return $N_w$
\EndFunction
\end{algorithmic}
\end{algorithm}

\noindent\emph{Reading a flow chart.} The root is the differential being proved
and the terminal nodes are the inputs we assume: a stable sphere differential
$\mathbb{S}(s,f)$, a differential on the $\mathbb{S}/2$ column $\mathrm{C2}(s,f)$, or a group on
which $\mathbf{d}_r$ is already forced. Every edge is a single instance of
naturality (\Cref{prop:ehp}) or the Leibniz rule (\Cref{prop:leibniz}), so
following the labeled maps $\mathrm{E},\mathrm{H},\mathrm{P}$ from the terminal
nodes up to the root replays exactly the chain of constraints that pins down the
value at the root. A node's color records how determined its differential was
when it was used: gray unconstrained, blue partially constrained, green once a
unique nonzero value is forced, red once it is forced to vanish. Each
refined node also points back to its own earlier, less-determined state (the
adjacent gray or blue node), so the chart shows $\mathcal{D}_r$ shrinking in place.
Because every step is valid and the terminal nodes are assumptions, the chart is
a complete proof of the differential at its root, checkable edge by edge.

The terminal nodes (\Cref{fig:legend-terminal}) mark the places where a proof begins. An
orange node is the differential in some bidegree of a stable Adams spectral
sequence, for either the sphere spectrum $\mathbb{S}$ or the mod-$2$ Moore
spectrum $\mathbb{S}/2$; these are the inputs we assume. A red terminal node is a
differential that vanishes because its target is the trivial vector space, and a
gray terminal node is a completely unknown differential, one whose space of
possible values is still the full space of linear maps from the source to the
target. The internal nodes (\Cref{fig:legend-internal}) record differentials that the propagation has
already constrained. A green node is a differential that is fully known and
nonzero, and a red node is one that is fully known and zero; here \emph{fully
known} means that the space of possible matrices has been narrowed to a single
element. A blue node is a differential that is only partially known, in the
sense that some matrices have been ruled out but more than one possibility
remains. The diagrams in \Cref{fig:legend-maps,fig:legend-products} record the two kinds of constraint. In the
left-hand diagram of \Cref{fig:legend-maps}, a map
$\mathrm{F}\colon \mathcal{U}_r^{n,s,f} \to \mathcal{U}_r^{\mathrm{F}(n,s,f)}$ lets us use what we
know about the differential on $\mathcal{U}_r^{\mathrm{F}(n,s,f)}$ to refine the
differential on $\mathcal{U}_r^{n,s,f}$; the right-hand diagram is the mirror
image, refining $\mathcal{U}_r^{\mathrm{F}(n,s,f)}$ using $\mathcal{U}_r^{n,s,f}$. \Cref{fig:legend-products}
shows the analogous refinements coming from the Leibniz rule for a product
$x\cdot y$.
\begin{example}
\label{ex:simple}
Among unstable Adams differentials that stabilize to zero, the one of lowest
stem occurs on $\mathcal{U}_2^{5,20,5}$. The commutativity of the following diagram shows how the differential on $\mathcal{U}_2^{5, 20, 5}$ can be obtained by pulling back the stable Adams differential on $h_0 f_0$ through iterative applications of the maps $\mathrm{H}$ and $\mathrm{P}$. Note that all unstable tridegrees in the diagram have a single generator and each horizontal map is nontrivial on that generator.

\begin{center}
\begin{tikzcd}[
    ampersand replacement=\&,
    every label/.append style={font=\small},
    every arrow/.append style={thick},
    row sep=2em, column sep=2.5em,
    cells={nodes={font=\small}}
]
    {\mathcal{U}_2^{5,20,5}} \& 
    {\mathcal{U}_2^{2,21,7}} \& 
    {\mathcal{U}_2^{3,20,6}} \& 
    {\mathcal{U}_2^{5,18,5}} \& 
    {E_2^{18,5}(\mathbb{S}) \cong \mathbb{F}_2\{h_0 f_0\}} \\
    {\mathcal{U}_2^{5,19,7}} \& 
    {\mathcal{U}_2^{2,20,9}} \& 
    {\mathcal{U}_2^{3,19,8}} \& 
    {\mathcal{U}_2^{5,17,7}} \& 
    {E_2^{17,7}(\mathbb{S}) \cong \mathbb{F}_2\{h_0^3 e_0\}}
    \arrow["{\mathrm{P}}", pos=0.4, from=1-1, to=1-2]
    \arrow["{\mathbf{d}_2}"', pos=0.4, dashed, from=1-1, to=2-1]
    \arrow["{\mathrm{H}}", pos=0.4, from=1-2, to=1-3]
    \arrow["{\mathbf{d}_2}", pos=0.4, dashed, from=1-2, to=2-2]
    \arrow["{\mathrm{H}}", pos=0.4, from=1-3, to=1-4]
    \arrow["{\mathbf{d}_2}", pos=0.4, dashed, from=1-3, to=2-3]
    \arrow["{\mathrm{E}_5^\infty}", pos=0.3, above, from=1-4, to=1-5]
    \arrow["{\mathbf{d}_2}", pos=0.4, dashed, from=1-4, to=2-4]
    \arrow["{\mathbf{d}_2} = {[1]}", pos=0.4, from=1-5, to=2-5]
    \arrow["{\mathrm{P}}"', pos=0.4, from=2-1, to=2-2]
    \arrow["{\mathrm{H}}"', pos=0.4, from=2-2, to=2-3]
    \arrow["{\mathrm{H}}"', pos=0.4, from=2-3, to=2-4]
    \arrow["{\mathrm{E}_5^\infty}"', pos=0.3, below, from=2-4, to=2-5]
\end{tikzcd}
\end{center}

\noindent The stable group $E_2^{18,5}(\mathbb{S})$ is generated by $h_0 f_0$, which supports the known stable differential $d_2(h_0 f_0) = h_0^3 e_0$. Since the stabilization map $\mathrm{E}_5^\infty$ carries the generator of $\mathcal{U}_2^{5,18,5}$ to $h_0 f_0$ and the generator of $\mathcal{U}_2^{5,17,7}$ to $h_0^3 e_0$, commutativity of the rightmost square forces the differential on $\mathcal{U}_2^{5,18,5}$ to be nonzero. Repeating this argument square by square from right to left (each of the maps $\mathrm{H}$, $\mathrm{H}$, and $\mathrm{P}$ commutes with $\mathbf{d}_2$ and is nonzero on the relevant generator) forces the differentials on $\mathcal{U}_2^{3,20,6}$, on $\mathcal{U}_2^{2,21,7}$, and finally on $\mathcal{U}_2^{5,20,5}$ to be nonzero as well. Using the notational schema presented above, we can abbreviate this commutative diagram as a flow chart given in \Cref{fig:flowchart1}.

\begin{figure}[htbp]
\centering
   \scalebox{\flowchartscale}{%
   \begin{tikzpicture}[>=latex,line join=bevel]
      \node (n_5_20_5) at (0,0) [ctpnode,fill=CtpGreen] {$\mathbf{(5, 20, 5)}$};
      \node (n_2_21_7) at (0,-1.2) [ctpnode,fill=CtpGreen] {$\mathbf{(2, 21, 7)}$};
      \node (n_3_20_6) at (0,-2.4) [ctpnode,fill=CtpGreen] {$\mathbf{(3, 20, 6)}$};
      \node (n_5_18_5) at (0,-3.6) [ctpnode,fill=CtpGreen] {$\mathbf{(5, 18, 5)}$};
      \node (n_Stable_5_18_5) at (0,-4.8) [ctpnode,fill=CtpPeach] {$\mathbf{\mathbb{S}(18, 5)}$};

      \draw [CtpText, ->, very thick] (n_5_20_5) to[out=270,in=90] node [right,CtpText,inner sep=2pt]
    {$\mathbf{P}$} (n_2_21_7);
      \draw [CtpText, ->, very thick] (n_2_21_7) to[out=270,in=90] node [right,CtpText,inner sep=2pt]
    {$\mathbf{H}$} (n_3_20_6);
      \draw [CtpText, ->, very thick] (n_3_20_6) to[out=270,in=90] node [right,CtpText,inner sep=2pt]
    {$\mathbf{H}$} (n_5_18_5);
      \draw [CtpText, ->, very thick] (n_5_18_5) to[out=270,in=90] (n_Stable_5_18_5);
    \end{tikzpicture}}
\caption{Flow chart for $\mathcal{D}_2^{5, 20, 5}$.}
\label{fig:flowchart1}
\end{figure}
\end{example}

The proof in \Cref{ex:simple} is a relatively simple example of a multistep proof using EHP naturality. For more complicated arguments, our abbreviated notation allows us to efficiently communicate the inputs, logical steps, and output of a proof in a way that remains readable even when the corresponding commutative diagram becomes too large and convoluted to follow. 
\begin{example}
\label{ex:4_20_6}
There is a nontrivial differential on $\mathcal{U}_2^{4, 20, 6}$ that can be computed using stable differentials and applications of the maps $\mathrm{E}$ and $\mathrm{P}$. The flow chart for this computation is given in \Cref{fig:flowchart2}. The differential on $\mathcal{U}_2^{9, 17, 4}$ is completely determined by stable information using the maps $\mathrm{E}_9^\infty$ and $\mathrm{s}_9$. Similarly, there is a nontrivial differential on $\mathcal{U}_2^{3, 20, 6}$ that is lifted from the nontrivial differential on $E_2^{19, 5}(\mathbb{S}/2)$. These nontrivial differentials can be pushed to $\mathcal{U}_2^{4, 20, 6}$ through the maps $\mathrm{P}$ and $\mathrm{E}$, and the remaining possibilities for the differential on $\mathcal{U}_2^{4, 20, 6}$ can be ruled out using naturality of the stabilization map. 

\begin{figure}[htbp]
\centering
   \scalebox{\flowchartscale}{%
   \begin{tikzpicture}[>=latex,line join=bevel]
      \node (n_4_20_6_66318) at (0,0) [ctpnode,fill=CtpGreen] {$\mathbf{(4, 20, 6)}$};
      \node (n_9_17_4_17990) at (-3,-1.2) [ctpnode,fill=CtpSky] {$\mathbf{(9, 17, 4)}$};
      \node (n_C2_9_17_4_17990) at (-4,-2.4) [ctpnode,fill=CtpPeach] {$\mathbf{C2(10, 3)}$};
      \node (n_Stable_9_17_4_17990) at (-2,-2.4) [ctpnode,fill=CtpPeach] {$\mathbf{\mathbb{S}(17, 4)}$};
      \node (n_4_20_6_17990) at (3,-1.2) [ctpnode,fill=CtpSky] {$\mathbf{(4, 20, 6)}$};
      \node (n_3_20_6_16743) at (1,-2.4) [ctpnode,fill=CtpSky] {$\mathbf{(3, 20, 6)}$};
      \node (n_C2_3_20_6_16743) at (1,-3.5999999999999996) [ctpnode,fill=CtpPeach] {$\mathbf{C2(19, 5)}$};
      \node (n_4_20_6_16743) at (5,-2.4) [ctpnode,fill=CtpSky] {$\mathbf{(4, 20, 6)}$};
      \node (n_Stable_4_20_6_16743) at (5,-3.5999999999999996) [ctpnode,fill=CtpPeach] {$\mathbf{\mathbb{S}(20, 6)}$};

      \draw [CtpText, <-, very thick] (n_4_20_6_66318) to[out=260,in=90] node [above left,CtpText,inner sep=2pt]
    {$\mathbf{P}$} (n_9_17_4_17990);
      \draw [CtpText, ->, very thick] (n_9_17_4_17990) to[out=250,in=90] (n_C2_9_17_4_17990);
      \draw [CtpText, ->, very thick] (n_9_17_4_17990) to[out=290,in=90] (n_Stable_9_17_4_17990);
      \draw [CtpText, very thick] (n_4_20_6_66318) to[out=270,in=90] (n_4_20_6_17990);
      \draw [CtpText, <-, very thick] (n_4_20_6_17990) to[out=260,in=90] node [above left,CtpText,inner sep=2pt]
    {$\mathbf{E}$} (n_3_20_6_16743);
      \draw [CtpText, ->, very thick] (n_3_20_6_16743) to[out=270,in=90] (n_C2_3_20_6_16743);
      \draw [CtpText, very thick] (n_4_20_6_17990) to[out=270,in=90] (n_4_20_6_16743);
      \draw [CtpText, ->, very thick] (n_4_20_6_16743) to[out=270,in=90] (n_Stable_4_20_6_16743);
    \end{tikzpicture}}
\caption{Flow chart for $\mathcal{D}_2^{4, 20, 6}$.}
\label{fig:flowchart2}
\end{figure}
\end{example}

\subsection{The unstable Leibniz rule}

\noindent The unstable Leibniz rule from \Cref{prop:leibniz} states that the following diagram commutes, where $M$ is the algebraic composition map from \Cref{prop:leibniz}
\begin{equation*}
\begin{tikzcd}[ampersand replacement=\&,
    every label/.append style={font=\small},
    every arrow/.append style={thick},
    row sep=2.5em, column sep=4em]
{\mathcal{U}_r^{n, s, f} \otimes \mathcal{U}_r^{n + s - 1, s', f'}} \& {\mathcal{U}_r^{n, s+s', f+f'}} \\
{(\mathcal{U}_r^{n, s-1, f+r} \otimes \mathcal{U}_r^{n + s - 1, s', f'}) \oplus (\mathcal{U}_r^{n, s, f} \otimes \mathcal{U}_r^{n+s, s'-1, f'+r})} \& {\mathcal{U}_r^{n, s+s'-1, f+f'+r}}
\arrow["{M \circ (\mathrm{id} \otimes \mathrm{E})}", from=1-1, to=1-2]
\arrow["{\begin{bmatrix}
    \mathbf{d}_r \otimes \mathrm{id}  \\ \mathrm{id} \otimes (\mathbf{d}_r \circ \mathrm{E})
\end{bmatrix}\quad}"',
    from=1-1, to=2-1]
\arrow["{\mathbf{d}_r}", from=1-2, to=2-2]
\arrow["{\begin{bmatrix} M & M \end{bmatrix}}"',
    from=2-1, to=2-2]
\end{tikzcd}
\end{equation*}
The commutativity of the full Leibniz diagram constrains the possible values of $\mathbf{d}_r$ on $\mathcal{U}_r^{n, s+s', f+f'}$ by updating $\mathcal{D}_r^{n, s+s', f+f'}$ according to
\begin{align*}
\mathcal{D}_r^{n, s+s', f+f'} &\update \bigl((M \circ (\mathrm{id} \otimes \mathrm{E}))^*\bigr)^{-1}\big(M_*(\mathcal{D}_r^{n, s, f} \otimes \mathrm{id}) + M_*(\mathrm{id} \otimes \mathrm{E}^*(\mathcal{D}_r^{n + s, s', f'}))\big).
\end{align*}
These constraints allow us to take information about Adams differentials in earlier stems and propagate it into later stems. We can similarly use the Leibniz rule to take information about Adams differentials in later stems and back-propagate it into earlier stems.
If $x$ is a cycle on the unstable Adams $E_r$-page, the unstable Leibniz rule reduces to the first of the following equations; if $\mathrm{E}y$ is a cycle, it reduces to the second:
\begin{align*}
  x \circ \mathbf{d}_r(\mathrm{E}y) = \mathbf{d}_r(x \circ \mathrm{E}y)
\qquad\qquad\qquad
\mathbf{d}_r(x) \circ y = \mathbf{d}_r(x \circ \mathrm{E}y).
\end{align*}
\noindent We refer to these two specializations of the unstable Leibniz rule as the \emph{partial Leibniz rule}.

\noindent Because algebraic compositions in the unstable Adams spectral sequence are not commutative, we need to keep track of whether we are multiplying on the left or on the right. Given $x \in \mathcal{U}_r^{n, s, f}$ and $y \in \mathcal{U}_r^{n + s, s', f'}$, we define the left and right multiplication maps
\begin{equation*}
L_x : \mathcal{U}_r^{n + s, s', f'} \to \mathcal{U}_r^{n, s + s', f + f'} \qquad R_y : \mathcal{U}_r^{n, s, f} \to \mathcal{U}_r^{n, s + s', f + f'}.
\end{equation*}

\noindent The partial Leibniz rule then implies that the following diagrams commute for
\(x \in \mathcal{U}_r^{n, s, f}\) and
\(y \in \mathcal{U}_r^{n+s-1, s', f'}\). Note that $y$ now lives one sphere lower than in the composition pairing, because the partial Leibniz rule gives information about the differential on $\mathrm{E}y \in \mathcal{U}_r^{n+s, s', f'}$ rather than on $y$ itself. The left diagram commutes when $x$ is a cycle, and the right diagram commutes when $\mathrm{E}y$ is a cycle.

\begin{center}
\begin{tikzcd}[ampersand replacement=\&,
    every label/.append style={font=\small},
    every arrow/.append style={thick},
    row sep=2.5em, column sep=4em]
    {\mathcal{U}_r^{n + s - 1, s', f'}}
        \& {\mathcal{U}_r^{n, s+s', f+f'}}
        \& {\mathcal{U}_r^{n, s, f}}
        \& {\mathcal{U}_r^{n, s+s', f+f'}} \\
    {\mathcal{U}_r^{n+s, s'-1, f'+r}}
        \& {\mathcal{U}_r^{n, s+s'-1, f+f'+r}}
        \& {\mathcal{U}_r^{n, s-1, f+r}}
        \& {\mathcal{U}_r^{n, s+s'-1, f+f'+r}}

\arrow["{L_x \circ \mathrm{E}}", from=1-1, to=1-2]
\arrow[" \mathbf{d}_r \circ \mathrm{E}"',
    from=1-1, to=2-1]
\arrow["{\mathbf{d}_r}", from=1-2, to=2-2]
\arrow["{L_x}"',
    from=2-1, to=2-2]

\arrow["{R_{\mathrm{E}y}}", from=1-3, to=1-4]
\arrow["{\mathbf{d}_r}"',
    from=1-3, to=2-3]
\arrow["{\mathbf{d}_r}", from=1-4, to=2-4]
\arrow["{R_y}"',
    from=2-3, to=2-4]
\end{tikzcd}
\end{center}

\noindent On the level of affine subspaces we can express the partial Leibniz rule as a constraint on the space of possible differentials in the following way:
\begin{equation*}
  \mathcal{D}_r^{n + s,s',f'} \update \bigl((L_x)_* \circ \mathrm{E}^*\bigr)^{-1}\!\bigl((L_x \circ \mathrm{E})^*(\mathcal{D}_r^{n, s + s', f + f'})\bigr)
  \qquad
  \mathcal{D}_r^{n,s,f} \update (R_y)_*^{-1}\!\bigl(R_{\mathrm{E}y}^*(\mathcal{D}_r^{n, s + s', f + f'})\bigr)
\end{equation*}
These updates back-propagate information from the product tridegree into its factors; forward propagation into the product tridegree is already handled by the full Leibniz update above.
The algebraic compositions computed in \Cref{data:ehp-comp} provide a full description of the maps $M, \mathrm{E}$, $L_x$, $R_y$, and $R_{\mathrm{E}y}$ through total degree 76.

\begin{remark*}
  It is possible to write down commutative diagrams based on the full Leibniz rule that isolate the differentials on $x$ or $\mathrm{E}y$ without assuming that either factor tridegree consists of cycles; the analogous diagrams for the stable Leibniz rule are found in \cite{Beauvais_Feisthauer_2022}. Our implementation applies these constraints when neither partial rule is available, but the linear algebra involved in writing them down explicitly is more complicated, and we omit the formulas for brevity.
\end{remark*}
\begin{example}
  \label{ex:prod}

  There is a nontrivial differential on $\mathcal{U}_3^{9, 45, 11}$ that can be
  computed using the algebraic unstable compositions
  \begin{equation*}
    [\lambda_8\lambda_4\lambda_1^3] \circ [\lambda_{12}\lambda_4\lambda_5\lambda_3^3]
      = [\lambda_8\lambda_2^2\lambda_4\lambda_3^3\lambda_6^2\lambda_5\lambda_3]
    \qquad\qquad
    [\lambda_8\lambda_4\lambda_1^3] \circ [\lambda_2^4\lambda_3\lambda_5\lambda_7\lambda_3^2]
      = [\lambda_4\lambda_2^7\lambda_3^2\lambda_6^2\lambda_5\lambda_3]
  \end{equation*}
  together with the stable input differential $d_3(\Delta h_2^2) = h_1 d_0^2$, where $\Delta h_2^2 = [\lambda_{12}\lambda_4\lambda_5\lambda_3^3]$ and $h_1 d_0^2 = [\lambda_7\lambda_6\lambda_2\lambda_3\lambda_4^2\lambda_1^3]$.
\begin{figure}[htbp]
\centering
\scalebox{\flowchartscale}{%
\begin{tikzpicture}[>=latex,line join=bevel]
  \node (n_9_45_11_20632) at (0,0) [ctpnode,fill=CtpGreen] {$\mathbf{(9, 45, 11)}$};
  \node (n_9_15_5_18201) at (-2,-1.2) [ctpnode,fill=CtpRed] {$\mathbf{(9, 15, 5)}$};
  \node (n_24_30_6_18201) at (0.0,-1.2) [ctpnode,fill=CtpSky] {$\mathbf{(24, 30, 6)}$};
  \node (n_Stable_24_30_6_18201) at (0,-2.4) [ctpnode,fill=CtpPeach] {$\mathbf{\mathbb{S}(30, 6)}$};
  \node (n_9_45_11_18201) at (2,-1.2) [ctpnode,fill=CtpOverlay2] {$\mathbf{(9, 45, 11)}$};

  \draw [CtpText, very thick] (n_9_45_11_20632) to[out=270,in=90] (n_9_15_5_18201);
  \draw [CtpText, very thick] (n_9_45_11_20632) to[out=270,in=90] (n_24_30_6_18201);
  \draw [CtpText, ->, very thick] (n_24_30_6_18201) to[out=270,in=90] (n_Stable_24_30_6_18201);
  \draw [CtpText, very thick] (n_9_45_11_20632) to[out=270,in=90] (n_9_45_11_18201);
\end{tikzpicture}}
\caption{Flow chart for $\mathcal{D}_3^{9, 45, 11}$.}
\end{figure}

\end{example}

\noindent By combining deductions using EHP naturality with deductions using the unstable Leibniz rule, we can construct increasingly complex arguments for challenging differentials.
\begin{example}
  The following flow chart illustrates our process of applying constraints to $\mathcal{D}_3^{9, 37, 7}$. This computation uses constraints coming from both EHP naturality and the unstable Leibniz rule. One of the classes in $\mathcal{U}_{3}^{9, 37, 7}$ is a cycle because it is the image under $\mathrm{H}$ of a cycle in $\mathcal{U}_3^{5, 41, 8}$. The other generator of $\mathcal{U}_3^{9, 37, 7}$ supports a nontrivial $d_3$-differential. The input data for this computation consists of nontrivial $d_3$-differentials on $E_3^{44, 10}(\mathbb{S})$ and $E_3^{52, 12}(\mathbb{S}/2)$. The maps $\mathrm{E}_{17}^{\infty}$ and $\mathrm{s}_3$ pull these differentials back to $\mathcal{U}_3^{17, 44, 10}$ and $\mathcal{U}_3^{3, 53, 13}$ respectively. By pushing and pulling these differentials through the EHP maps, we can deduce the nontrivial differential on $\mathcal{U}_3^{9, 51, 11}$. The product
  \begin{equation*}
    [\lambda_7\lambda_{12}\lambda_4\lambda_5\lambda_3^3] \circ [\lambda_6\lambda_2\lambda_3^2]
      = [\lambda_7\lambda_2\lambda_4\lambda_5^3\lambda_3\lambda_6^2\lambda_5\lambda_3]
  \end{equation*}
  together with the fact that $[\lambda_6\lambda_2\lambda_3^2] \in \mathcal{U}_3^{46, 14, 4}$ is a cycle gives the
  differential on $[\lambda_7\lambda_{12}\lambda_4\lambda_5\lambda_3^3] \in \mathcal{U}_3^{9, 37, 7}$.

\begin{figure}[!htbp]
  \centering
  {\tikzset{every picture/.append style={execute at end picture={%
    \path let \p1=(current bounding box.west), \p2=(current bounding box.east)
      in (-\x1,\y1) (-\x2,\y2);}}}%
  \scalebox{\flowchartscale}{\begin{tikzpicture}[>=latex,line join=bevel]
      \node (n_9_37_7_20632) at (0,0) [ctpnode,fill=CtpGreen] {$\mathbf{(9, 37, 7)}$};
      \node (n_9_37_7_18206) at (-3.0,-1.2) [ctpnode,fill=CtpSky] {$\mathbf{(9, 37, 7)}$};
      \node (n_5_41_8_5854) at (-3.0,-2.4) [ctpnode,fill=CtpRed] {$\mathbf{(5, 41, 8)}$};
      \node (n_46_14_4_18206) at (0.0,-1.2) [ctpnode,fill=CtpRed] {$\mathbf{(46, 14, 4)}$};
      \node (n_9_51_11_18206) at (3.0,-1.2) [ctpnode,fill=CtpGreen] {$\mathbf{(9, 51, 11)}$};
      \node (n_4_54_13_17000) at (3.0,-2.4) [ctpnode,fill=CtpGreen] {$\mathbf{(4, 54, 13)}$};
      \node (n_7_51_12_16928) at (3.0,-3.5999999999999996) [ctpnode,fill=CtpGreen] {$\mathbf{(7, 51, 12)}$};
      \node (n_8_51_12_16898) at (3.0,-4.8) [ctpnode,fill=CtpGreen] {$\mathbf{(8, 51, 12)}$};
      \node (n_17_44_10_6868) at (1.5,-6.0) [ctpnode,fill=CtpSky] {$\mathbf{(17, 44, 10)}$};
      \node (n_Stable_17_44_10_6868) at (1.5,-7.2) [ctpnode,fill=CtpPeach] {$\mathbf{\mathbb{S}(44, 10)}$};
      \node (n_8_51_12_6868) at (4.5,-6.0) [ctpnode,fill=CtpSky] {$\mathbf{(8, 51, 12)}$};
      \node (n_7_51_12_6045) at (4.5,-7.2) [ctpnode,fill=CtpSky] {$\mathbf{(7, 51, 12)}$};
      \node (n_6_51_12_5946) at (4.5,-8.4) [ctpnode,fill=CtpGreen] {$\mathbf{(6, 51, 12)}$};
      \node (n_5_51_12_5883) at (4.5,-9.6) [ctpnode,fill=CtpGreen] {$\mathbf{(5, 51, 12)}$};
      \node (n_3_53_13_5736) at (4.5,-10.799999999999999) [ctpnode,fill=CtpSky] {$\mathbf{(3, 53, 13)}$};
      \node (n_C2_3_53_13_5736) at (4.5,-11.999999999999998) [ctpnode,fill=CtpPeach] {$\mathbf{C2(52, 12)}$};

      \draw [CtpText, very thick] (n_9_37_7_20632) to[out=260,in=90] (n_9_37_7_18206);
      \draw [CtpText, <-, very thick] (n_9_37_7_18206) to[out=270,in=90] node [midway,left,CtpText,inner sep=2pt]
    {$\mathbf{H}$} (n_5_41_8_5854);
      \draw [CtpText, very thick] (n_9_37_7_20632) to[out=270,in=90] (n_46_14_4_18206);
      \draw [CtpText, very thick] (n_9_37_7_20632) to[out=270,in=90] (n_9_51_11_18206);
      \draw [CtpText, ->, very thick] (n_9_51_11_18206) to[out=270,in=90] node [midway,left,CtpText,inner sep=2pt]
    {$\mathbf{P}$} (n_4_54_13_17000);
      \draw [CtpText, ->, very thick] (n_4_54_13_17000) to[out=270,in=90] node [midway,left,CtpText,inner sep=2pt]
    {$\mathbf{H}$} (n_7_51_12_16928);
      \draw [CtpText, ->, very thick] (n_7_51_12_16928) to[out=270,in=90] node [midway,left,CtpText,inner sep=2pt]
    {$\mathbf{E}$} (n_8_51_12_16898);
      \draw [CtpText, <-, very thick] (n_8_51_12_16898) to[out=260,in=90] node [above left,CtpText,inner sep=2pt]
    {$\mathbf{P}$} (n_17_44_10_6868);
      \draw [CtpText, ->, very thick] (n_17_44_10_6868) to[out=270,in=90] (n_Stable_17_44_10_6868);
      \draw [CtpText, very thick] (n_8_51_12_16898) to[out=270,in=90] (n_8_51_12_6868);
      \draw [CtpText, <-, very thick] (n_8_51_12_6868) to[out=270,in=90] node [midway,left,CtpText,inner sep=2pt]
    {$\mathbf{E}$} (n_7_51_12_6045);
      \draw [CtpText, <-, very thick] (n_7_51_12_6045) to[out=270,in=90] node [midway,left,CtpText,inner sep=2pt]
    {$\mathbf{E}$} (n_6_51_12_5946);
      \draw [CtpText, <-, very thick] (n_6_51_12_5946) to[out=270,in=90] node [midway,left,CtpText,inner sep=2pt]
    {$\mathbf{E}$} (n_5_51_12_5883);
      \draw [CtpText, <-, very thick] (n_5_51_12_5883) to[out=270,in=90] node [midway,left,CtpText,inner sep=2pt]
    {$\mathbf{H}$} (n_3_53_13_5736);
      \draw [CtpText, ->, very thick] (n_3_53_13_5736) to[out=270,in=90] (n_C2_3_53_13_5736);
\end{tikzpicture}}}
  \caption{Flow chart for $\mathcal{D}_3^{9, 37, 7}$ combining EHP and Leibniz rule.}
  \label{fig:flowchart3}
\end{figure}
\end{example}

\clearpage

\section{Results}
\label{sec:results}

\noindent
This section presents our results: unstable Adams charts for the spheres $S^n$ with $2 \leq n \leq 50$, displaying proven differentials as well as the remaining possible unknown differentials on each page, and the resulting upper and lower bounds on the orders of the $2$-primary unstable homotopy groups, tabulated in \Cref{app:bounds}. A guide to reading our Adams charts is given in \Cref{app:charts}, and the legend for the flow charts we use to present differential proofs is given in \Crefrange{fig:legend-terminal}{fig:legend-products}.

\subsection{Uncertainties}
If the $d_r$-differential on some tridegree is not fully determined, then we cannot properly evaluate relations that land in that tridegree on the $E_{r + 1}$-page. We are then unable to make general deductions about the $d_{r + 1}$-differential on that tridegree. As we continue turning the page, ignoring these tridegrees creates a growing family of uncertain differentials around that original uncertain tridegree. In order to avoid this, we implement a much simpler element-wise solver which imposes the following logic, for $\mathrm{F}$ ranging over the maps of \Cref{prop:ehp} and \Cref{thm:mah} and over composition with a fixed class:
\begin{enumerate}
  \item If $x$ is a cycle then $\mathrm{F}(x)$ is a cycle
  \item If $\mathrm{F}(x)$ is not a boundary then $x$ is not a boundary
\end{enumerate}
This solver is applied at the end of the computation on each page and to tridegrees which were skipped by the main computation due to an uncertain differential value on an earlier page.
\subsection{Differentials by contradiction}
A handful of differentials can be resolved via proof by contradiction. In future work we will systematically apply this approach throughout all tridegrees. In this project we only compute 17 differentials this way. These particular differentials were chosen because leaving them unresolved makes our Adams charts much harder to read. The proofs are all similar and are easy to check by hand.

\begin{table}[H]
\centering
\caption{Tridegrees $(n, s, f)$ with $d_3$-differentials resolved by proof by contradiction.}
\label{tab:contradiction-tridegrees}
\begin{tabular}{ccccc}
$(5, 47, 10)$ & $(6, 34, 10)$ & $(6, 37, 10)$ & $(6, 42, 14)$ & $(6, 43, 10)$ \\
$(6, 45, 14)$ & $(6, 48, 7)$ & $(7, 47, 16)$ & $(10, 38, 10)$ & $(10, 41, 10)$ \\
$(10, 46, 14)$ & $(10, 47, 10)$ & $(10, 49, 14)$ & $(11, 51, 10)$ & $(13, 51, 9)$ \\
$(14, 36, 7)$ & $(14, 47, 11)$ \\
\end{tabular}
\end{table}

\begin{example}
  There is a possible $d_3$-differential on the unique generator $x$ of $\mathcal{U}_3^{6, 37, 10}.$ If this differential were zero then $x$ would survive to $\mathcal{U}_4^{6, 37, 10}$ where we would have a $d_4$-differential $d_4(H(x)) = H(y)$ on $\mathcal{U}_4^{11, 32, 9}$ where $y$ is the unique generator of $\mathcal{U}_4^{6, 36, 14}.$ Naturality of $H$ would then imply that $d_4(x) = y$ but this is impossible since $y \circ h_1$ is not a boundary for degree reasons so $y$ cannot be a boundary. Therefore our initial assumption that the $d_3$-differential on $\mathcal{U}_3^{6, 37, 10}$ is trivial must be false so $d_3(x)$ is equal to the unique generator of $\mathcal{U}_3^{6, 36, 13}.$
\end{example}

\subsection{Why graphs}
Clicking a differential in our interactive Adams charts opens the flow chart of its proof. Zero differentials are not drawn in the charts, but their flow charts can be queried directly with the \texttt{why.py} script in~\cite{ehpreprint_data}.

\newpage

\subsection{Unstable Adams charts}
\label{app:charts}
\noindent The results of our computation are displayed graphically as Adams charts in the GitHub repository~\cite{ehpreprint_data}. The horizontal axis is the stem, the vertical axis is the Adams filtration, and each node in a bidegree is an $\mathbb{F}_2$-basis element of the corresponding bigraded $\operatorname{Ext}$ group. 
\medskip
\noindent Recall that we write $[\lambda_I]$ for the class whose cycle representative in the homology of the lambda algebra has leading term $\lambda_I$. Vertical lines correspond to right multiplication by $[\lambda_0]$, given by unstable algebraic Yoneda composition, carrying a class $[\lambda_I]$ to $[\lambda_I] \circ [\lambda_0]$. The lines of slope $1$ and slope $1/3$ correspond to algebraic right multiplication by $[\lambda_1]$ and $[\lambda_3]$, respectively (see \Cref{fig:chart-structure}).

\begin{figure}[H]
\centering
\begin{tikzpicture}[
    x=1.4cm, y=1.4cm, 
    line width=0.9pt,
    dot/.style={circle, fill=CtpText, inner sep=0pt, minimum size=4pt},
    every node/.style={font=\small},
  ]
  \draw[CtpSurface1, line width=0.4pt, step=1] (-0.3,-0.3) grid (3.3,1.3);
  \draw[CtpText] (0,0) -- (0,1); 
  \draw[CtpText] (0,0) -- (1,1); 
  \draw[CtpText] (0,0) -- (3,1); 
  \node[dot] at (0,0) {};
  \node[dot] at (0,1) {};
  \node[dot] at (1,1) {};
  \node[dot] at (3,1) {};
  \node[below] at (0,0) {$[\lambda_I]$};
  \node[above] at (0,1) {$[\lambda_I] \circ [\lambda_0]$};
  \node[right] at (1,1) {$[\lambda_I] \circ [\lambda_1]$};
  \node[right] at (3,1) {$[\lambda_I] \circ [\lambda_3]$};
\end{tikzpicture}
\caption{The structural product lines. From a class $[\lambda_I]$, a vertical line records right multiplication by $[\lambda_0]$, a slope-$1$ line records right multiplication by $[\lambda_1]$, and a slope-$1/3$ line records right multiplication by $[\lambda_3]$.}
\label{fig:chart-structure}
\end{figure}

\medskip
\noindent Solid negative-slope lines correspond to unstable Adams differentials, with $d_2$, $d_3$, $d_4$, $d_5$, $d_6$, $d_7$, and $d_8$ drawn in {\color{CtpTeal}teal}, {\color{CtpRed}red}, {\color{CtpGreen}green}, {\color{CtpBlue}navy}, {\color{CtpYellow}yellow}, {\color{CtpPeach}peach}, and {\color{CtpMauve}mauve}, respectively. Dashed negative-slope lines of the same colors indicate possible values of a $d_r$-differential that our algorithm was able neither to prove nor to disprove. If a basis element on the $E_r$-page supports neither a solid line nor a dashed line, then the $d_r$-differential on that element is proved to be zero.

\medskip
\noindent Our graphical calculus does leave some room for ambiguity. For example, the display in \Cref{fig:chart-ambiguity} could indicate that either $d_r(x) \in \{0, y + z\}$ or $d_r(x) \in \{y, z\}$. There are other possibilities as well. In such cases the raw CSV data records the possible values of each differential unambiguously.

\begin{figure}[H]
\centering
\begin{tikzpicture}[
    x=1.4cm, y=1.4cm,
    line width=0.9pt,
    dot/.style={circle, fill=CtpText, inner sep=0pt, minimum size=4pt},
    every node/.style={font=\small},
  ]
  \draw[CtpSurface1, line width=0.4pt, step=1] (-1.4,-0.3) grid (0.4,2.3);
  \draw[CtpTeal, dashed] (0,0) -- (-1.1,2);
  \draw[CtpTeal, dashed] (0,0) -- (-0.9,2);
  \node[dot] at (0,0)    {};
  \node[dot] at (-1.1,2) {};
  \node[dot] at (-0.9,2) {};
  \node[below right] at (0,0)     {$x$};
  \node[above left]  at (-1.1,2)  {$y$};
  \node[above right] at (-0.9,2)  {$z$};
\end{tikzpicture}
\caption{An ambiguous display: the reading may be $d_r(x) \in \{0, y + z\}$ or $d_r(x) \in \{y, z\}$; the raw data resolves which.}
\label{fig:chart-ambiguity}
\end{figure}
\subsection{Bounds on the orders of the \texorpdfstring{$2$}{2}-primary unstable homotopy groups of spheres}
\label{app:bounds}
\noindent \Crefrange{tab:bounds-true-2-14}{tab:bounds-true-39-50} record the bounds on the orders of the $2$-primary unstable homotopy groups of spheres proven by our computation, and are read as follows. The entry in column $n$ and row $s$ bounds the order of the $2$-primary component of $\pi_{n+s}(S^n)$, recorded through its base-$2$ logarithm: a single entry $e$ marks a homotopy group with no remaining unknown differentials, whose $2$-primary component has order exactly $2^e$. When a homotopy group still admits unknown differentials we can only bracket this order, and the entry is a range $e/\ell$ printed upper bound first: the $2$-primary component of $\pi_{n+s}(S^n)$ then has order $2^m$ for some $\ell \leq m \leq e$, so its order lies between the lower bound $2^\ell$ and the upper bound $2^e$. An entry of $\infty$ marks a homotopy group containing a $\mathbb{Z}$ summand. The {\color{CtpPink}pink staircase boundary} traces the metastable range: the entries lying on or to the right of it, up to the stable line, are in the metastable range $s \leq 3n - 4$, while those to its left are purely unstable. Finally, in the stable range $n \geq s + 2$ every entry in row $s$ records the order of the stable $s$-stem, so rather than reiterate it we print it once, at $n = s + 2$, and draw the repetitions as a vertical line running down each column.

\providecommand{\rngcell}[2]{#1/#2}
\providecommand{\stairlift}{3pt}
\providecommand{\stairshift}{2.5pt}
\providecommand{\stairedge}[1]{\ifcsname stairseen#1\endcsname\else\expandafter\gdef\csname stairseen#1\endcsname{}\tikzmark{#1}\fi}
\providecommand{\inftycell}{{\color{CtpOverlay2}\(\infty\)}}
\providecommand{\stabline}{{\color{CtpOverlay0}\rule[-3pt]{0.5pt}{11pt}}}

\begin{table}[p]
\caption{Bounds on the order of the $2$-primary component of the $s$-stem of $S^n$, recorded as a base-$2$ logarithm (upper/lower), $n = 2, \dots, 14$.}
\label{tab:bounds-true-2-14}
\centering
\footnotesize
\setlength{\tabcolsep}{3pt}
\renewcommand{\arraystretch}{1.08}
\begin{tabular*}{\linewidth}{@{\extracolsep{\fill}} r!{\stairedge{La}\vrule}*{13}{c} @{\stairedge{Ra}}}
\toprule
$s \backslash n$ & 2 & 3 & 4 & 5 & 6 & 7 & 8 & 9 & 10 & 11 & 12 & 13 & 14 \\
\midrule
0 & \inftycell & \stabline & \stabline & \stabline & \stabline & \stabline & \stabline & \stabline & \stabline & \stabline & \stabline & \stabline & \stabline \\
1 & \inftycell & 1 & \stabline & \stabline & \stabline & \stabline & \stabline & \stabline & \stabline & \stabline & \stabline & \stabline & \stabline \\
2 & 1 & 1 & 1 & \stabline & \stabline & \stabline & \stabline & \stabline & \stabline & \stabline & \stabline & \stabline & \stabline \\
3 & \tikzmark{La3}\makebox[0pt]{1} & \tikzmark{Ra3}\makebox[0pt]{2} & \inftycell & 3 & \stabline & \stabline & \stabline & \stabline & \stabline & \stabline & \stabline & \stabline & \stabline \\
4 & \tikzmark{La4}\makebox[0pt]{2} & \tikzmark{Ra4}\makebox[0pt]{1} & 2 & 1 & 0 & \stabline & \stabline & \stabline & \stabline & \stabline & \stabline & \stabline & \stabline \\
5 & \tikzmark{La5}\makebox[0pt]{1} & \tikzmark{Ra5}\makebox[0pt]{1} & 2 & 1 & \inftycell & 0 & \stabline & \stabline & \stabline & \stabline & \stabline & \stabline & \stabline \\
6 & 1 & \tikzmark{La6}\makebox[0pt]{0} & \tikzmark{Ra6}\makebox[0pt]{3} & 1 & 1 & 1 & 1 & \stabline & \stabline & \stabline & \stabline & \stabline & \stabline \\
7 & 0 & \tikzmark{La7}\makebox[0pt]{0} & \tikzmark{Ra7}\makebox[0pt]{0} & 1 & 2 & 3 & \inftycell & 4 & \stabline & \stabline & \stabline & \stabline & \stabline \\
8 & 0 & \tikzmark{La8}\makebox[0pt]{1} & \tikzmark{Ra8}\makebox[0pt]{1} & 1 & 4 & 3 & 4 & 3 & 2 & \stabline & \stabline & \stabline & \stabline \\
9 & 1 & 2 & \tikzmark{La9}\makebox[0pt]{3} & \tikzmark{Ra9}\makebox[0pt]{3} & 3 & 4 & 5 & 4 & \inftycell & 3 & \stabline & \stabline & \stabline \\
10 & 2 & 3 & \tikzmark{La10}\makebox[0pt]{6} & \tikzmark{Ra10}\makebox[0pt]{4} & 4 & 4 & 7 & 4 & 3 & 2 & 1 & \stabline & \stabline \\
11 & 3 & 4 & \tikzmark{La11}\makebox[0pt]{7} & \tikzmark{Ra11}\makebox[0pt]{5} & 5 & 4 & 4 & 4 & 3 & 3 & \inftycell & 3 & \stabline \\
12 & 4 & 2 & 6 & \tikzmark{La12}\makebox[0pt]{3} & \tikzmark{Ra12}\makebox[0pt]{4} & 0 & 0 & 0 & 2 & 1 & 2 & 1 & 0 \\
13 & 2 & 1 & 5 & \tikzmark{La13}\makebox[0pt]{2} & \tikzmark{Ra13}\makebox[0pt]{1} & 1 & 2 & 1 & 1 & 2 & 2 & 1 & \inftycell \\
14 & 1 & 1 & 5 & \tikzmark{La14}\makebox[0pt]{2} & \tikzmark{Ra14}\makebox[0pt]{3} & 5 & 9 & 6 & 5 & 5 & 7 & 5 & 4 \\
15 & 1 & 1 & 1 & 2 & \tikzmark{La15}\makebox[0pt]{3} & \tikzmark{Ra15}\makebox[0pt]{6} & 8 & 7 & 6 & 5 & 5 & 6 & 6 \\
16 & 1 & 2 & 3 & 2 & \tikzmark{La16}\makebox[0pt]{5} & \tikzmark{Ra16}\makebox[0pt]{4} & 7 & 4 & 5 & 1 & 1 & 1 & 4 \\
17 & 2 & 4 & 9 & 4 & \tikzmark{La17}\makebox[0pt]{4} & \tikzmark{Ra17}\makebox[0pt]{4} & 5 & 4 & 3 & 3 & 4 & 4 & 4 \\
18 & 4 & 4 & 10 & 5 & \rngcell{5}{4} & \tikzmark{La18}\makebox[0pt]{\rngcell{4}{3}} & \tikzmark{Ra18}\makebox[0pt]{\rngcell{7}{6}} & 4 & 5 & 6 & 10 & 7 & 7 \\
19 & 4 & 3 & 7 & 4 & \rngcell{8}{6} & \tikzmark{La19}\makebox[0pt]{\rngcell{4}{3}} & \tikzmark{Ra19}\makebox[0pt]{\rngcell{4}{3}} & 4 & 4 & 6 & 8 & 6 & 6 \\
20 & 3 & 2 & 6 & 3 & \rngcell{7}{6} & \tikzmark{La20}\makebox[0pt]{3} & \tikzmark{Ra20}\makebox[0pt]{3} & 3 & 6 & 5 & 8 & 6 & 7 \\
21 & 2 & 1 & \rngcell{5}{4} & 2 & 1 & 2 & \tikzmark{La21}\makebox[0pt]{5} & \tikzmark{Ra21}\makebox[0pt]{3} & 3 & 4 & \rngcell{5}{4} & 5 & 4 \\
22 & 1 & 1 & \rngcell{5}{4} & 3 & 5 & 6 & \tikzmark{La22}\makebox[0pt]{12} & \tikzmark{Ra22}\makebox[0pt]{7} & \rngcell{6}{5} & \rngcell{6}{5} & \rngcell{9}{7} & 6 & 7 \\
23 & 1 & 2 & 5 & 5 & \rngcell{11}{10} & \rngcell{10}{9} & \tikzmark{La23}\makebox[0pt]{\rngcell{13}{12}} & \tikzmark{Ra23}\makebox[0pt]{\rngcell{11}{10}} & \rngcell{11}{10} & \rngcell{9}{8} & \rngcell{9}{8} & 9 & 9 \\
24 & 2 & 3 & 5 & 4 & \rngcell{9}{8} & \rngcell{7}{5} & \rngcell{12}{8} & \tikzmark{La24}\makebox[0pt]{\rngcell{7}{6}} & \tikzmark{Ra24}\makebox[0pt]{10} & 5 & 4 & 4 & 7 \\
25 & 3 & 5 & 11 & 7 & 10 & \rngcell{8}{7} & \rngcell{15}{12} & \tikzmark{La25}\makebox[0pt]{8} & \tikzmark{Ra25}\makebox[0pt]{6} & 6 & 7 & 5 & 5 \\
26 & 5 & 5 & \rngcell{15}{14} & 8 & 11 & 9 & 14 & \tikzmark{La26}\makebox[0pt]{9} & \tikzmark{Ra26}\makebox[0pt]{8} & 7 & \rngcell{13}{11} & 7 & 5 \\
27 & 5 & 5 & \rngcell{12}{10} & 7 & 10 & \rngcell{7}{6} & \rngcell{10}{9} & 7 & \tikzmark{La27}\makebox[0pt]{8} & \tikzmark{Ra27}\makebox[0pt]{8} & \rngcell{9}{7} & 7 & 5 \\
28 & 5 & 4 & \rngcell{12}{11} & 5 & \rngcell{9}{8} & \rngcell{4}{3} & \rngcell{7}{6} & 5 & \tikzmark{La28}\makebox[0pt]{7} & \tikzmark{Ra28}\makebox[0pt]{6} & 8 & 4 & 6 \\
29 & 4 & 3 & 12 & 6 & \rngcell{6}{5} & 7 & 14 & 8 & \tikzmark{La29}\makebox[0pt]{9} & \tikzmark{Ra29}\makebox[0pt]{6} & \rngcell{8}{7} & \rngcell{5}{4} & \rngcell{3}{2} \\
30 & 3 & 3 & \rngcell{10}{9} & 6 & 8 & 9 & 20 & 10 & \rngcell{10}{9} & \tikzmark{La30}\makebox[0pt]{7} & \tikzmark{Ra30}\makebox[0pt]{\rngcell{9}{8}} & \rngcell{7}{6} & \rngcell{9}{7} \\
31 & 3 & 4 & \rngcell{8}{7} & 5 & 10 & 7 & 13 & 8 & \rngcell{9}{8} & \tikzmark{La31}\makebox[0pt]{5} & \tikzmark{Ra31}\makebox[0pt]{7} & 6 & \rngcell{10}{9} \\
32 & 4 & 4 & 11 & 5 & 11 & 6 & 12 & 6 & \rngcell{10}{9} & \tikzmark{La32}\makebox[0pt]{4} & \tikzmark{Ra32}\makebox[0pt]{5} & 5 & 8 \\
33 & 4 & 3 & 12 & 4 & 8 & 6 & 11 & 6 & \rngcell{5}{4} & 7 & \tikzmark{La33}\makebox[0pt]{\rngcell{8}{7}} & \tikzmark{Ra33}\makebox[0pt]{8} & 9 \\
34 & 3 & 4 & 11 & 5 & \rngcell{9}{8} & 6 & 9 & 6 & 6 & 7 & \tikzmark{La34}\makebox[0pt]{\rngcell{14}{12}} & \tikzmark{Ra34}\makebox[0pt]{8} & 9 \\
35 & 4 & 4 & 10 & 5 & \rngcell{11}{9} & \rngcell{6}{5} & \rngcell{7}{6} & 6 & 10 & 7 & \tikzmark{La35}\makebox[0pt]{\rngcell{10}{9}} & \tikzmark{Ra35}\makebox[0pt]{8} & 8 \\
36 & 4 & 3 & 9 & 4 & \rngcell{6}{5} & \rngcell{2}{1} & \rngcell{5}{3} & 2 & 7 & 4 & 7 & \tikzmark{La36}\makebox[0pt]{4} & \tikzmark{Ra36}\makebox[0pt]{\rngcell{9}{8}} \\
37 & 3 & 2 & 8 & 4 & 4 & 3 & \rngcell{12}{9} & 3 & 5 & 4 & \rngcell{9}{8} & \tikzmark{La37}\makebox[0pt]{5} & \tikzmark{Ra37}\makebox[0pt]{\rngcell{5}{4}} \\
38 & 2 & 1 & \rngcell{7}{6} & 4 & 9 & 8 & \rngcell{19}{17} & 9 & 11 & 9 & \rngcell{13}{12} & \tikzmark{La38}\makebox[0pt]{10} & \tikzmark{Ra38}\makebox[0pt]{9} \\
39 & 1 & 2 & \rngcell{4}{3} & 4 & 10 & 10 & \rngcell{18}{17} & 11 & 14 & 8 & 9 & 8 & 9 \\
40 & 2 & 3 & 6 & 4 & \rngcell{10}{9} & \rngcell{10}{9} & \rngcell{19}{17} & 10 & 12 & 4 & 5 & 4 & 7 \\
41 & 3 & 4 & 12 & 6 & \rngcell{8}{7} & \rngcell{9}{8} & \rngcell{18}{16} & 9 & \rngcell{9}{8} & 6 & \rngcell{11}{10} & 5 & \rngcell{7}{6} \\
42 & 4 & 6 & 16 & 9 & 11 & 10 & \rngcell{19}{17} & 10 & \rngcell{10}{9} & \rngcell{9}{8} & \rngcell{17}{14} & \rngcell{7}{6} & \rngcell{8}{7} \\
43 & 6 & 7 & \rngcell{17}{16} & 9 & 14 & 9 & \rngcell{14}{13} & 9 & \rngcell{12}{11} & \rngcell{7}{6} & \rngcell{12}{10} & \rngcell{6}{5} & \rngcell{8}{7} \\
44 & 7 & 5 & \rngcell{14}{12} & 5 & 8 & 4 & 11 & 5 & \rngcell{12}{10} & \rngcell{5}{4} & \rngcell{10}{9} & 5 & \rngcell{9}{8} \\
45 & 5 & 3 & \rngcell{13}{12} & 4 & \rngcell{3}{2} & 3 & 13 & 5 & \rngcell{7}{6} & \rngcell{5}{4} & \rngcell{10}{8} & 6 & \rngcell{6}{5} \\
46 & 3 & 1 & 10 & 5 & \rngcell{5}{4} & 6 & 15 & 9 & \rngcell{12}{11} & 8 & \rngcell{13}{11} & \rngcell{11}{10} & \rngcell{13}{11} \\
47 & 1 & 1 & 5 & 5 & \rngcell{10}{9} & \rngcell{10}{9} & \rngcell{16}{15} & 12 & \rngcell{16}{14} & \rngcell{11}{10} & \rngcell{12}{9} & \rngcell{13}{11} & \rngcell{19}{18} \\
48 & 1 & 4 & 7 & \rngcell{6}{5} & \rngcell{12}{11} & \rngcell{11}{10} & \rngcell{17}{16} & \rngcell{11}{10} & \rngcell{12}{10} & \rngcell{8}{6} & \rngcell{11}{9} & \rngcell{8}{7} & 15 \\
49 & 4 & 6 & \rngcell{12}{11} & \rngcell{7}{5} & \rngcell{13}{10} & \rngcell{8}{7} & \rngcell{15}{14} & \rngcell{8}{6} & \rngcell{8}{5} & \rngcell{8}{5} & \rngcell{16}{12} & \rngcell{9}{7} & \rngcell{11}{8} \\
50 & \rngcell{6}{5} & 7 & \rngcell{17}{14} & \rngcell{8}{6} & \rngcell{12}{7} & \rngcell{5}{4} & \rngcell{9}{8} & \rngcell{5}{4} & \rngcell{10}{8} & \rngcell{11}{9} & \rngcell{23}{14} & \rngcell{13}{11} & \rngcell{15}{12} \\
\bottomrule
\end{tabular*}
\begin{tikzpicture}[overlay, remember picture]
  \coordinate (Pa3) at ($(pic cs:La3)!0.5!(pic cs:Ra3)+(\stairshift,\stairlift)$);
  \coordinate (Pa4) at ($(pic cs:La4)!0.5!(pic cs:Ra4)+(\stairshift,\stairlift)$);
  \coordinate (Pa5) at ($(pic cs:La5)!0.5!(pic cs:Ra5)+(\stairshift,\stairlift)$);
  \coordinate (Pa6) at ($(pic cs:La6)!0.5!(pic cs:Ra6)+(\stairshift,\stairlift)$);
  \coordinate (Pa7) at ($(pic cs:La7)!0.5!(pic cs:Ra7)+(\stairshift,\stairlift)$);
  \coordinate (Pa8) at ($(pic cs:La8)!0.5!(pic cs:Ra8)+(\stairshift,\stairlift)$);
  \coordinate (Pa9) at ($(pic cs:La9)!0.5!(pic cs:Ra9)+(\stairshift,\stairlift)$);
  \coordinate (Pa10) at ($(pic cs:La10)!0.5!(pic cs:Ra10)+(\stairshift,\stairlift)$);
  \coordinate (Pa11) at ($(pic cs:La11)!0.5!(pic cs:Ra11)+(\stairshift,\stairlift)$);
  \coordinate (Pa12) at ($(pic cs:La12)!0.5!(pic cs:Ra12)+(\stairshift,\stairlift)$);
  \coordinate (Pa13) at ($(pic cs:La13)!0.5!(pic cs:Ra13)+(\stairshift,\stairlift)$);
  \coordinate (Pa14) at ($(pic cs:La14)!0.5!(pic cs:Ra14)+(\stairshift,\stairlift)$);
  \coordinate (Pa15) at ($(pic cs:La15)!0.5!(pic cs:Ra15)+(\stairshift,\stairlift)$);
  \coordinate (Pa16) at ($(pic cs:La16)!0.5!(pic cs:Ra16)+(\stairshift,\stairlift)$);
  \coordinate (Pa17) at ($(pic cs:La17)!0.5!(pic cs:Ra17)+(\stairshift,\stairlift)$);
  \coordinate (Pa18) at ($(pic cs:La18)!0.5!(pic cs:Ra18)+(\stairshift,\stairlift)$);
  \coordinate (Pa19) at ($(pic cs:La19)!0.5!(pic cs:Ra19)+(\stairshift,\stairlift)$);
  \coordinate (Pa20) at ($(pic cs:La20)!0.5!(pic cs:Ra20)+(\stairshift,\stairlift)$);
  \coordinate (Pa21) at ($(pic cs:La21)!0.5!(pic cs:Ra21)+(\stairshift,\stairlift)$);
  \coordinate (Pa22) at ($(pic cs:La22)!0.5!(pic cs:Ra22)+(\stairshift,\stairlift)$);
  \coordinate (Pa23) at ($(pic cs:La23)!0.5!(pic cs:Ra23)+(\stairshift,\stairlift)$);
  \coordinate (Pa24) at ($(pic cs:La24)!0.5!(pic cs:Ra24)+(\stairshift,\stairlift)$);
  \coordinate (Pa25) at ($(pic cs:La25)!0.5!(pic cs:Ra25)+(\stairshift,\stairlift)$);
  \coordinate (Pa26) at ($(pic cs:La26)!0.5!(pic cs:Ra26)+(\stairshift,\stairlift)$);
  \coordinate (Pa27) at ($(pic cs:La27)!0.5!(pic cs:Ra27)+(\stairshift,\stairlift)$);
  \coordinate (Pa28) at ($(pic cs:La28)!0.5!(pic cs:Ra28)+(\stairshift,\stairlift)$);
  \coordinate (Pa29) at ($(pic cs:La29)!0.5!(pic cs:Ra29)+(\stairshift,\stairlift)$);
  \coordinate (Pa30) at ($(pic cs:La30)!0.5!(pic cs:Ra30)+(\stairshift,\stairlift)$);
  \coordinate (Pa31) at ($(pic cs:La31)!0.5!(pic cs:Ra31)+(\stairshift,\stairlift)$);
  \coordinate (Pa32) at ($(pic cs:La32)!0.5!(pic cs:Ra32)+(\stairshift,\stairlift)$);
  \coordinate (Pa33) at ($(pic cs:La33)!0.5!(pic cs:Ra33)+(\stairshift,\stairlift)$);
  \coordinate (Pa34) at ($(pic cs:La34)!0.5!(pic cs:Ra34)+(\stairshift,\stairlift)$);
  \coordinate (Pa35) at ($(pic cs:La35)!0.5!(pic cs:Ra35)+(\stairshift,\stairlift)$);
  \coordinate (Pa36) at ($(pic cs:La36)!0.5!(pic cs:Ra36)+(\stairshift,\stairlift)$);
  \coordinate (Pa37) at ($(pic cs:La37)!0.5!(pic cs:Ra37)+(\stairshift,\stairlift)$);
  \coordinate (Pa38) at ($(pic cs:La38)!0.5!(pic cs:Ra38)+(\stairshift,\stairlift)$);
  \draw[CtpPink, line width=0.9pt, line join=miter, line cap=round] let \p1=(Pa3), \p2=(Pa4), \p3=(pic cs:La) in
    (\x3, {\y1+(\y1-\y2)/2}) -- (\x1, {\y1+(\y1-\y2)/2}) -- (\x1,\y1);
  \draw[CtpPink, line width=0.9pt, line join=miter, line cap=round] let \p1=(Pa3), \p2=(Pa4) in
    (\x1,\y1) -- (\x1,{(\y1+\y2)/2}) -- (\x2,{(\y1+\y2)/2}) -- (\x2,\y2);
  \draw[CtpPink, line width=0.9pt, line join=miter, line cap=round] let \p1=(Pa4), \p2=(Pa5) in
    (\x1,\y1) -- (\x1,{(\y1+\y2)/2}) -- (\x2,{(\y1+\y2)/2}) -- (\x2,\y2);
  \draw[CtpPink, line width=0.9pt, line join=miter, line cap=round] let \p1=(Pa5), \p2=(Pa6) in
    (\x1,\y1) -- (\x1,{(\y1+\y2)/2}) -- (\x2,{(\y1+\y2)/2}) -- (\x2,\y2);
  \draw[CtpPink, line width=0.9pt, line join=miter, line cap=round] let \p1=(Pa6), \p2=(Pa7) in
    (\x1,\y1) -- (\x1,{(\y1+\y2)/2}) -- (\x2,{(\y1+\y2)/2}) -- (\x2,\y2);
  \draw[CtpPink, line width=0.9pt, line join=miter, line cap=round] let \p1=(Pa7), \p2=(Pa8) in
    (\x1,\y1) -- (\x1,{(\y1+\y2)/2}) -- (\x2,{(\y1+\y2)/2}) -- (\x2,\y2);
  \draw[CtpPink, line width=0.9pt, line join=miter, line cap=round] let \p1=(Pa8), \p2=(Pa9) in
    (\x1,\y1) -- (\x1,{(\y1+\y2)/2}) -- (\x2,{(\y1+\y2)/2}) -- (\x2,\y2);
  \draw[CtpPink, line width=0.9pt, line join=miter, line cap=round] let \p1=(Pa9), \p2=(Pa10) in
    (\x1,\y1) -- (\x1,{(\y1+\y2)/2}) -- (\x2,{(\y1+\y2)/2}) -- (\x2,\y2);
  \draw[CtpPink, line width=0.9pt, line join=miter, line cap=round] let \p1=(Pa10), \p2=(Pa11) in
    (\x1,\y1) -- (\x1,{(\y1+\y2)/2}) -- (\x2,{(\y1+\y2)/2}) -- (\x2,\y2);
  \draw[CtpPink, line width=0.9pt, line join=miter, line cap=round] let \p1=(Pa11), \p2=(Pa12) in
    (\x1,\y1) -- (\x1,{(\y1+\y2)/2}) -- (\x2,{(\y1+\y2)/2}) -- (\x2,\y2);
  \draw[CtpPink, line width=0.9pt, line join=miter, line cap=round] let \p1=(Pa12), \p2=(Pa13) in
    (\x1,\y1) -- (\x1,{(\y1+\y2)/2}) -- (\x2,{(\y1+\y2)/2}) -- (\x2,\y2);
  \draw[CtpPink, line width=0.9pt, line join=miter, line cap=round] let \p1=(Pa13), \p2=(Pa14) in
    (\x1,\y1) -- (\x1,{(\y1+\y2)/2}) -- (\x2,{(\y1+\y2)/2}) -- (\x2,\y2);
  \draw[CtpPink, line width=0.9pt, line join=miter, line cap=round] let \p1=(Pa14), \p2=(Pa15) in
    (\x1,\y1) -- (\x1,{(\y1+\y2)/2}) -- (\x2,{(\y1+\y2)/2}) -- (\x2,\y2);
  \draw[CtpPink, line width=0.9pt, line join=miter, line cap=round] let \p1=(Pa15), \p2=(Pa16) in
    (\x1,\y1) -- (\x1,{(\y1+\y2)/2}) -- (\x2,{(\y1+\y2)/2}) -- (\x2,\y2);
  \draw[CtpPink, line width=0.9pt, line join=miter, line cap=round] let \p1=(Pa16), \p2=(Pa17) in
    (\x1,\y1) -- (\x1,{(\y1+\y2)/2}) -- (\x2,{(\y1+\y2)/2}) -- (\x2,\y2);
  \draw[CtpPink, line width=0.9pt, line join=miter, line cap=round] let \p1=(Pa17), \p2=(Pa18) in
    (\x1,\y1) -- (\x1,{(\y1+\y2)/2}) -- (\x2,{(\y1+\y2)/2}) -- (\x2,\y2);
  \draw[CtpPink, line width=0.9pt, line join=miter, line cap=round] let \p1=(Pa18), \p2=(Pa19) in
    (\x1,\y1) -- (\x1,{(\y1+\y2)/2}) -- (\x2,{(\y1+\y2)/2}) -- (\x2,\y2);
  \draw[CtpPink, line width=0.9pt, line join=miter, line cap=round] let \p1=(Pa19), \p2=(Pa20) in
    (\x1,\y1) -- (\x1,{(\y1+\y2)/2}) -- (\x2,{(\y1+\y2)/2}) -- (\x2,\y2);
  \draw[CtpPink, line width=0.9pt, line join=miter, line cap=round] let \p1=(Pa20), \p2=(Pa21) in
    (\x1,\y1) -- (\x1,{(\y1+\y2)/2}) -- (\x2,{(\y1+\y2)/2}) -- (\x2,\y2);
  \draw[CtpPink, line width=0.9pt, line join=miter, line cap=round] let \p1=(Pa21), \p2=(Pa22) in
    (\x1,\y1) -- (\x1,{(\y1+\y2)/2}) -- (\x2,{(\y1+\y2)/2}) -- (\x2,\y2);
  \draw[CtpPink, line width=0.9pt, line join=miter, line cap=round] let \p1=(Pa22), \p2=(Pa23) in
    (\x1,\y1) -- (\x1,{(\y1+\y2)/2}) -- (\x2,{(\y1+\y2)/2}) -- (\x2,\y2);
  \draw[CtpPink, line width=0.9pt, line join=miter, line cap=round] let \p1=(Pa23), \p2=(Pa24) in
    (\x1,\y1) -- (\x1,{(\y1+\y2)/2}) -- (\x2,{(\y1+\y2)/2}) -- (\x2,\y2);
  \draw[CtpPink, line width=0.9pt, line join=miter, line cap=round] let \p1=(Pa24), \p2=(Pa25) in
    (\x1,\y1) -- (\x1,{(\y1+\y2)/2}) -- (\x2,{(\y1+\y2)/2}) -- (\x2,\y2);
  \draw[CtpPink, line width=0.9pt, line join=miter, line cap=round] let \p1=(Pa25), \p2=(Pa26) in
    (\x1,\y1) -- (\x1,{(\y1+\y2)/2}) -- (\x2,{(\y1+\y2)/2}) -- (\x2,\y2);
  \draw[CtpPink, line width=0.9pt, line join=miter, line cap=round] let \p1=(Pa26), \p2=(Pa27) in
    (\x1,\y1) -- (\x1,{(\y1+\y2)/2}) -- (\x2,{(\y1+\y2)/2}) -- (\x2,\y2);
  \draw[CtpPink, line width=0.9pt, line join=miter, line cap=round] let \p1=(Pa27), \p2=(Pa28) in
    (\x1,\y1) -- (\x1,{(\y1+\y2)/2}) -- (\x2,{(\y1+\y2)/2}) -- (\x2,\y2);
  \draw[CtpPink, line width=0.9pt, line join=miter, line cap=round] let \p1=(Pa28), \p2=(Pa29) in
    (\x1,\y1) -- (\x1,{(\y1+\y2)/2}) -- (\x2,{(\y1+\y2)/2}) -- (\x2,\y2);
  \draw[CtpPink, line width=0.9pt, line join=miter, line cap=round] let \p1=(Pa29), \p2=(Pa30) in
    (\x1,\y1) -- (\x1,{(\y1+\y2)/2}) -- (\x2,{(\y1+\y2)/2}) -- (\x2,\y2);
  \draw[CtpPink, line width=0.9pt, line join=miter, line cap=round] let \p1=(Pa30), \p2=(Pa31) in
    (\x1,\y1) -- (\x1,{(\y1+\y2)/2}) -- (\x2,{(\y1+\y2)/2}) -- (\x2,\y2);
  \draw[CtpPink, line width=0.9pt, line join=miter, line cap=round] let \p1=(Pa31), \p2=(Pa32) in
    (\x1,\y1) -- (\x1,{(\y1+\y2)/2}) -- (\x2,{(\y1+\y2)/2}) -- (\x2,\y2);
  \draw[CtpPink, line width=0.9pt, line join=miter, line cap=round] let \p1=(Pa32), \p2=(Pa33) in
    (\x1,\y1) -- (\x1,{(\y1+\y2)/2}) -- (\x2,{(\y1+\y2)/2}) -- (\x2,\y2);
  \draw[CtpPink, line width=0.9pt, line join=miter, line cap=round] let \p1=(Pa33), \p2=(Pa34) in
    (\x1,\y1) -- (\x1,{(\y1+\y2)/2}) -- (\x2,{(\y1+\y2)/2}) -- (\x2,\y2);
  \draw[CtpPink, line width=0.9pt, line join=miter, line cap=round] let \p1=(Pa34), \p2=(Pa35) in
    (\x1,\y1) -- (\x1,{(\y1+\y2)/2}) -- (\x2,{(\y1+\y2)/2}) -- (\x2,\y2);
  \draw[CtpPink, line width=0.9pt, line join=miter, line cap=round] let \p1=(Pa35), \p2=(Pa36) in
    (\x1,\y1) -- (\x1,{(\y1+\y2)/2}) -- (\x2,{(\y1+\y2)/2}) -- (\x2,\y2);
  \draw[CtpPink, line width=0.9pt, line join=miter, line cap=round] let \p1=(Pa36), \p2=(Pa37) in
    (\x1,\y1) -- (\x1,{(\y1+\y2)/2}) -- (\x2,{(\y1+\y2)/2}) -- (\x2,\y2);
  \draw[CtpPink, line width=0.9pt, line join=miter, line cap=round] let \p1=(Pa37), \p2=(Pa38) in
    (\x1,\y1) -- (\x1,{(\y1+\y2)/2}) -- (\x2,{(\y1+\y2)/2}) -- (\x2,\y2);
  \draw[CtpPink, line width=0.9pt, line join=miter, line cap=round] let \p1=(Pa37), \p2=(Pa38), \p3=(pic cs:Ra) in
    (\x2,\y2) -- (\x2, {\y2-(\y1-\y2)/2}) -- (\x3, {\y2-(\y1-\y2)/2});
\end{tikzpicture}
\end{table}

\begin{table}[p]
\caption{Bounds on the order of the $2$-primary component of the $s$-stem of $S^n$, recorded as a base-$2$ logarithm (upper/lower), $n = 15, \dots, 26$.}
\label{tab:bounds-true-15-26}
\centering
\footnotesize
\setlength{\tabcolsep}{3pt}
\renewcommand{\arraystretch}{1.08}
\begin{tabular*}{\linewidth}{@{\extracolsep{\fill}} r!{\stairedge{Lb}\vrule}*{12}{c} @{\stairedge{Rb}}}
\toprule
$s \backslash n$ & 15 & 16 & 17 & 18 & 19 & 20 & 21 & 22 & 23 & 24 & 25 & 26 \\
\midrule
0 & \stabline & \stabline & \stabline & \stabline & \stabline & \stabline & \stabline & \stabline & \stabline & \stabline & \stabline & \stabline \\
1 & \stabline & \stabline & \stabline & \stabline & \stabline & \stabline & \stabline & \stabline & \stabline & \stabline & \stabline & \stabline \\
2 & \stabline & \stabline & \stabline & \stabline & \stabline & \stabline & \stabline & \stabline & \stabline & \stabline & \stabline & \stabline \\
3 & \stabline & \stabline & \stabline & \stabline & \stabline & \stabline & \stabline & \stabline & \stabline & \stabline & \stabline & \stabline \\
4 & \stabline & \stabline & \stabline & \stabline & \stabline & \stabline & \stabline & \stabline & \stabline & \stabline & \stabline & \stabline \\
5 & \stabline & \stabline & \stabline & \stabline & \stabline & \stabline & \stabline & \stabline & \stabline & \stabline & \stabline & \stabline \\
6 & \stabline & \stabline & \stabline & \stabline & \stabline & \stabline & \stabline & \stabline & \stabline & \stabline & \stabline & \stabline \\
7 & \stabline & \stabline & \stabline & \stabline & \stabline & \stabline & \stabline & \stabline & \stabline & \stabline & \stabline & \stabline \\
8 & \stabline & \stabline & \stabline & \stabline & \stabline & \stabline & \stabline & \stabline & \stabline & \stabline & \stabline & \stabline \\
9 & \stabline & \stabline & \stabline & \stabline & \stabline & \stabline & \stabline & \stabline & \stabline & \stabline & \stabline & \stabline \\
10 & \stabline & \stabline & \stabline & \stabline & \stabline & \stabline & \stabline & \stabline & \stabline & \stabline & \stabline & \stabline \\
11 & \stabline & \stabline & \stabline & \stabline & \stabline & \stabline & \stabline & \stabline & \stabline & \stabline & \stabline & \stabline \\
12 & \stabline & \stabline & \stabline & \stabline & \stabline & \stabline & \stabline & \stabline & \stabline & \stabline & \stabline & \stabline \\
13 & 0 & \stabline & \stabline & \stabline & \stabline & \stabline & \stabline & \stabline & \stabline & \stabline & \stabline & \stabline \\
14 & 3 & 2 & \stabline & \stabline & \stabline & \stabline & \stabline & \stabline & \stabline & \stabline & \stabline & \stabline \\
15 & 6 & \inftycell & 6 & \stabline & \stabline & \stabline & \stabline & \stabline & \stabline & \stabline & \stabline & \stabline \\
16 & 3 & 4 & 3 & 2 & \stabline & \stabline & \stabline & \stabline & \stabline & \stabline & \stabline & \stabline \\
17 & 5 & 6 & 5 & \inftycell & 4 & \stabline & \stabline & \stabline & \stabline & \stabline & \stabline & \stabline \\
18 & 7 & 10 & 7 & 6 & 5 & 4 & \stabline & \stabline & \stabline & \stabline & \stabline & \stabline \\
19 & 5 & 5 & 5 & 4 & 4 & \inftycell & 4 & \stabline & \stabline & \stabline & \stabline & \stabline \\
20 & 3 & 3 & 3 & 5 & 4 & 5 & 4 & 3 & \stabline & \stabline & \stabline & \stabline \\
21 & 3 & 4 & 3 & 3 & 4 & 4 & 3 & \inftycell & 2 & \stabline & \stabline & \stabline \\
22 & 7 & 11 & 7 & 6 & 6 & 8 & 5 & 4 & 3 & 2 & \stabline & \stabline \\
23 & 11 & 13 & 11 & 10 & 9 & 9 & 9 & 8 & 8 & \inftycell & 8 & \stabline \\
24 & 6 & 9 & 6 & 7 & 3 & 2 & 2 & 4 & 3 & 4 & 3 & 2 \\
25 & 6 & 7 & 6 & 4 & 3 & 2 & 2 & 2 & 3 & 4 & 3 & \inftycell \\
26 & 5 & 8 & 5 & 5 & 5 & 8 & 5 & 5 & 5 & 8 & 5 & 4 \\
27 & 3 & 3 & 3 & 3 & 5 & 7 & 5 & 5 & 4 & 4 & 4 & 3 \\
28 & 1 & 1 & 1 & 4 & 3 & 6 & 4 & 5 & 1 & 1 & 1 & 3 \\
29 & \rngcell{3}{2} & \rngcell{5}{4} & 2 & 2 & 3 & 4 & 4 & 3 & 2 & 2 & 1 & 1 \\
30 & \rngcell{9}{6} & \rngcell{15}{12} & 8 & 6 & 6 & 9 & 6 & 7 & 7 & \rngcell{10}{9} & 6 & 5 \\
31 & \rngcell{11}{9} & \rngcell{13}{11} & 11 & 10 & 8 & 8 & 8 & 8 & 10 & \rngcell{12}{11} & 10 & 9 \\
32 & \rngcell{8}{7} & \rngcell{12}{10} & 8 & 11 & 6 & 4 & 4 & 7 & 6 & 9 & 6 & 7 \\
33 & \rngcell{9}{8} & \rngcell{13}{11} & 9 & 7 & 7 & 7 & 5 & 5 & 6 & 7 & 6 & 5 \\
34 & \rngcell{9}{8} & \rngcell{13}{11} & \rngcell{9}{8} & 9 & 8 & \rngcell{14}{12} & 8 & 6 & 6 & 9 & 6 & 7 \\
35 & \rngcell{9}{7} & \rngcell{12}{9} & \rngcell{9}{8} & 10 & 10 & \rngcell{11}{9} & 9 & 7 & 5 & 5 & 5 & 5 \\
36 & \rngcell{5}{4} & \rngcell{7}{6} & 5 & 7 & 6 & 8 & 4 & 6 & 1 & 1 & 1 & 4 \\
37 & 7 & 9 & 7 & 7 & 5 & 7 & 4 & 3 & 3 & 5 & 3 & 3 \\
38 & 10 & 18 & 10 & 8 & 6 & 8 & 6 & \rngcell{8}{7} & \rngcell{8}{7} & \rngcell{14}{13} & 8 & \rngcell{7}{6} \\
39 & 9 & 11 & 9 & 7 & 7 & 9 & 8 & \rngcell{11}{10} & \rngcell{12}{11} & \rngcell{14}{13} & 12 & \rngcell{12}{11} \\
40 & 6 & 8 & 6 & \rngcell{11}{10} & 7 & 9 & \rngcell{9}{8} & \rngcell{12}{11} & \rngcell{12}{10} & \rngcell{16}{14} & 12 & 15 \\
41 & 6 & 8 & 7 & \rngcell{7}{6} & 9 & \rngcell{10}{9} & \rngcell{10}{9} & \rngcell{11}{10} & \rngcell{11}{9} & \rngcell{15}{12} & 11 & 9 \\
42 & \tikzmark{Lb42}\makebox[0pt]{7} & \tikzmark{Rb42}\makebox[0pt]{10} & 8 & 8 & 9 & \rngcell{16}{14} & 10 & 11 & \rngcell{11}{10} & \rngcell{15}{12} & \rngcell{11}{10} & 11 \\
43 & \tikzmark{Lb43}\makebox[0pt]{4} & \tikzmark{Rb43}\makebox[0pt]{5} & 4 & 5 & 5 & \rngcell{7}{6} & 6 & \rngcell{7}{6} & \rngcell{7}{5} & \rngcell{10}{7} & \rngcell{7}{6} & 8 \\
44 & \tikzmark{Lb44}\makebox[0pt]{3} & \tikzmark{Rb44}\makebox[0pt]{3} & 3 & 6 & 5 & \rngcell{7}{6} & 5 & \rngcell{11}{9} & \rngcell{6}{5} & \rngcell{8}{7} & 7 & 9 \\
45 & 7 & \tikzmark{Lb45}\makebox[0pt]{9} & \tikzmark{Rb45}\makebox[0pt]{8} & 9 & 10 & \rngcell{12}{11} & 11 & \rngcell{11}{10} & 13 & 15 & 14 & 15 \\
46 & \rngcell{13}{12} & \tikzmark{Lb46}\makebox[0pt]{\rngcell{21}{19}} & \tikzmark{Rb46}\makebox[0pt]{\rngcell{13}{11}} & \rngcell{12}{11} & \rngcell{13}{12} & \rngcell{16}{15} & 14 & 14 & 15 & \rngcell{23}{22} & 15 & 14 \\
47 & \rngcell{17}{16} & \tikzmark{Lb47}\makebox[0pt]{\rngcell{22}{20}} & \tikzmark{Rb47}\makebox[0pt]{\rngcell{17}{15}} & \rngcell{15}{14} & \rngcell{14}{13} & \rngcell{15}{14} & 14 & 15 & 15 & \rngcell{17}{16} & 15 & 12 \\
48 & 14 & \rngcell{20}{19} & \tikzmark{Lb48}\makebox[0pt]{14} & \tikzmark{Rb48}\makebox[0pt]{\rngcell{17}{16}} & \rngcell{9}{8} & 8 & 8 & 10 & 9 & 11 & 9 & \rngcell{13}{12} \\
49 & \rngcell{12}{9} & \rngcell{20}{15} & \tikzmark{Lb49}\makebox[0pt]{\rngcell{12}{9}} & \tikzmark{Rb49}\makebox[0pt]{\rngcell{10}{8}} & \rngcell{7}{6} & \rngcell{7}{6} & \rngcell{6}{5} & 5 & 5 & 7 & 5 & \rngcell{5}{4} \\
50 & \rngcell{12}{9} & \rngcell{18}{13} & \tikzmark{Lb50}\makebox[0pt]{\rngcell{12}{9}} & \tikzmark{Rb50}\makebox[0pt]{\rngcell{10}{9}} & 9 & \rngcell{16}{13} & \rngcell{8}{7} & 6 & 6 & 9 & 6 & \rngcell{6}{5} \\
\bottomrule
\end{tabular*}
\begin{tikzpicture}[overlay, remember picture]
  \coordinate (Pb42) at ($(pic cs:Lb42)!0.5!(pic cs:Rb42)+(\stairshift,\stairlift)$);
  \coordinate (Pb43) at ($(pic cs:Lb43)!0.5!(pic cs:Rb43)+(\stairshift,\stairlift)$);
  \coordinate (Pb44) at ($(pic cs:Lb44)!0.5!(pic cs:Rb44)+(\stairshift,\stairlift)$);
  \coordinate (Pb45) at ($(pic cs:Lb45)!0.5!(pic cs:Rb45)+(\stairshift,\stairlift)$);
  \coordinate (Pb46) at ($(pic cs:Lb46)!0.5!(pic cs:Rb46)+(\stairshift,\stairlift)$);
  \coordinate (Pb47) at ($(pic cs:Lb47)!0.5!(pic cs:Rb47)+(\stairshift,\stairlift)$);
  \coordinate (Pb48) at ($(pic cs:Lb48)!0.5!(pic cs:Rb48)+(\stairshift,\stairlift)$);
  \coordinate (Pb49) at ($(pic cs:Lb49)!0.5!(pic cs:Rb49)+(\stairshift,\stairlift)$);
  \coordinate (Pb50) at ($(pic cs:Lb50)!0.5!(pic cs:Rb50)+(\stairshift,\stairlift)$);
  \draw[CtpPink, line width=0.9pt, line join=miter, line cap=round] let \p1=(Pb42), \p2=(Pb43), \p3=(pic cs:Lb) in
    (\x3, {\y1+(\y1-\y2)/2}) -- (\x1, {\y1+(\y1-\y2)/2}) -- (\x1,\y1);
  \draw[CtpPink, line width=0.9pt, line join=miter, line cap=round] let \p1=(Pb42), \p2=(Pb43) in
    (\x1,\y1) -- (\x1,{(\y1+\y2)/2}) -- (\x2,{(\y1+\y2)/2}) -- (\x2,\y2);
  \draw[CtpPink, line width=0.9pt, line join=miter, line cap=round] let \p1=(Pb43), \p2=(Pb44) in
    (\x1,\y1) -- (\x1,{(\y1+\y2)/2}) -- (\x2,{(\y1+\y2)/2}) -- (\x2,\y2);
  \draw[CtpPink, line width=0.9pt, line join=miter, line cap=round] let \p1=(Pb44), \p2=(Pb45) in
    (\x1,\y1) -- (\x1,{(\y1+\y2)/2}) -- (\x2,{(\y1+\y2)/2}) -- (\x2,\y2);
  \draw[CtpPink, line width=0.9pt, line join=miter, line cap=round] let \p1=(Pb45), \p2=(Pb46) in
    (\x1,\y1) -- (\x1,{(\y1+\y2)/2}) -- (\x2,{(\y1+\y2)/2}) -- (\x2,\y2);
  \draw[CtpPink, line width=0.9pt, line join=miter, line cap=round] let \p1=(Pb46), \p2=(Pb47) in
    (\x1,\y1) -- (\x1,{(\y1+\y2)/2}) -- (\x2,{(\y1+\y2)/2}) -- (\x2,\y2);
  \draw[CtpPink, line width=0.9pt, line join=miter, line cap=round] let \p1=(Pb47), \p2=(Pb48) in
    (\x1,\y1) -- (\x1,{(\y1+\y2)/2}) -- (\x2,{(\y1+\y2)/2}) -- (\x2,\y2);
  \draw[CtpPink, line width=0.9pt, line join=miter, line cap=round] let \p1=(Pb48), \p2=(Pb49) in
    (\x1,\y1) -- (\x1,{(\y1+\y2)/2}) -- (\x2,{(\y1+\y2)/2}) -- (\x2,\y2);
  \draw[CtpPink, line width=0.9pt, line join=miter, line cap=round] let \p1=(Pb49), \p2=(Pb50) in
    (\x1,\y1) -- (\x1,{(\y1+\y2)/2}) -- (\x2,{(\y1+\y2)/2}) -- (\x2,\y2);
  \draw[CtpPink, line width=0.9pt, line join=miter, line cap=round] let \p1=(Pb49), \p2=(Pb50) in
    (\x2,\y2) -- (\x2, {\y2-(\y1-\y2)/2});
\end{tikzpicture}
\end{table}

\begin{table}[p]
\caption{Bounds on the order of the $2$-primary component of the $s$-stem of $S^n$, recorded as a base-$2$ logarithm (upper/lower), $n = 27, \dots, 38$.}
\label{tab:bounds-true-27-38}
\centering
\footnotesize
\setlength{\tabcolsep}{3pt}
\renewcommand{\arraystretch}{1.08}
\begin{tabular*}{\linewidth}{@{\extracolsep{\fill}} r!{\stairedge{Lc}\vrule}*{12}{c} @{\stairedge{Rc}}}
\toprule
$s \backslash n$ & 27 & 28 & 29 & 30 & 31 & 32 & 33 & 34 & 35 & 36 & 37 & 38 \\
\midrule
0 & \stabline & \stabline & \stabline & \stabline & \stabline & \stabline & \stabline & \stabline & \stabline & \stabline & \stabline & \stabline \\
1 & \stabline & \stabline & \stabline & \stabline & \stabline & \stabline & \stabline & \stabline & \stabline & \stabline & \stabline & \stabline \\
2 & \stabline & \stabline & \stabline & \stabline & \stabline & \stabline & \stabline & \stabline & \stabline & \stabline & \stabline & \stabline \\
3 & \stabline & \stabline & \stabline & \stabline & \stabline & \stabline & \stabline & \stabline & \stabline & \stabline & \stabline & \stabline \\
4 & \stabline & \stabline & \stabline & \stabline & \stabline & \stabline & \stabline & \stabline & \stabline & \stabline & \stabline & \stabline \\
5 & \stabline & \stabline & \stabline & \stabline & \stabline & \stabline & \stabline & \stabline & \stabline & \stabline & \stabline & \stabline \\
6 & \stabline & \stabline & \stabline & \stabline & \stabline & \stabline & \stabline & \stabline & \stabline & \stabline & \stabline & \stabline \\
7 & \stabline & \stabline & \stabline & \stabline & \stabline & \stabline & \stabline & \stabline & \stabline & \stabline & \stabline & \stabline \\
8 & \stabline & \stabline & \stabline & \stabline & \stabline & \stabline & \stabline & \stabline & \stabline & \stabline & \stabline & \stabline \\
9 & \stabline & \stabline & \stabline & \stabline & \stabline & \stabline & \stabline & \stabline & \stabline & \stabline & \stabline & \stabline \\
10 & \stabline & \stabline & \stabline & \stabline & \stabline & \stabline & \stabline & \stabline & \stabline & \stabline & \stabline & \stabline \\
11 & \stabline & \stabline & \stabline & \stabline & \stabline & \stabline & \stabline & \stabline & \stabline & \stabline & \stabline & \stabline \\
12 & \stabline & \stabline & \stabline & \stabline & \stabline & \stabline & \stabline & \stabline & \stabline & \stabline & \stabline & \stabline \\
13 & \stabline & \stabline & \stabline & \stabline & \stabline & \stabline & \stabline & \stabline & \stabline & \stabline & \stabline & \stabline \\
14 & \stabline & \stabline & \stabline & \stabline & \stabline & \stabline & \stabline & \stabline & \stabline & \stabline & \stabline & \stabline \\
15 & \stabline & \stabline & \stabline & \stabline & \stabline & \stabline & \stabline & \stabline & \stabline & \stabline & \stabline & \stabline \\
16 & \stabline & \stabline & \stabline & \stabline & \stabline & \stabline & \stabline & \stabline & \stabline & \stabline & \stabline & \stabline \\
17 & \stabline & \stabline & \stabline & \stabline & \stabline & \stabline & \stabline & \stabline & \stabline & \stabline & \stabline & \stabline \\
18 & \stabline & \stabline & \stabline & \stabline & \stabline & \stabline & \stabline & \stabline & \stabline & \stabline & \stabline & \stabline \\
19 & \stabline & \stabline & \stabline & \stabline & \stabline & \stabline & \stabline & \stabline & \stabline & \stabline & \stabline & \stabline \\
20 & \stabline & \stabline & \stabline & \stabline & \stabline & \stabline & \stabline & \stabline & \stabline & \stabline & \stabline & \stabline \\
21 & \stabline & \stabline & \stabline & \stabline & \stabline & \stabline & \stabline & \stabline & \stabline & \stabline & \stabline & \stabline \\
22 & \stabline & \stabline & \stabline & \stabline & \stabline & \stabline & \stabline & \stabline & \stabline & \stabline & \stabline & \stabline \\
23 & \stabline & \stabline & \stabline & \stabline & \stabline & \stabline & \stabline & \stabline & \stabline & \stabline & \stabline & \stabline \\
24 & \stabline & \stabline & \stabline & \stabline & \stabline & \stabline & \stabline & \stabline & \stabline & \stabline & \stabline & \stabline \\
25 & 2 & \stabline & \stabline & \stabline & \stabline & \stabline & \stabline & \stabline & \stabline & \stabline & \stabline & \stabline \\
26 & 3 & 2 & \stabline & \stabline & \stabline & \stabline & \stabline & \stabline & \stabline & \stabline & \stabline & \stabline \\
27 & 3 & \inftycell & 3 & \stabline & \stabline & \stabline & \stabline & \stabline & \stabline & \stabline & \stabline & \stabline \\
28 & 2 & 3 & 2 & 1 & \stabline & \stabline & \stabline & \stabline & \stabline & \stabline & \stabline & \stabline \\
29 & 2 & 2 & 1 & \inftycell & 0 & \stabline & \stabline & \stabline & \stabline & \stabline & \stabline & \stabline \\
30 & 5 & 7 & 4 & 3 & 2 & 1 & \stabline & \stabline & \stabline & \stabline & \stabline & \stabline \\
31 & 8 & 8 & 8 & 8 & 8 & \inftycell & 8 & \stabline & \stabline & \stabline & \stabline & \stabline \\
32 & 3 & 3 & 3 & 6 & 5 & 6 & 5 & 4 & \stabline & \stabline & \stabline & \stabline \\
33 & 5 & 5 & 5 & 5 & 6 & 7 & 6 & \inftycell & 5 & \stabline & \stabline & \stabline \\
34 & 8 & 11 & 8 & 8 & 8 & 11 & 8 & 7 & 6 & 5 & \stabline & \stabline \\
35 & 7 & 9 & 7 & 7 & 6 & 6 & 6 & 5 & 5 & \inftycell & 5 & \stabline \\
36 & 3 & 6 & 4 & 5 & 1 & 1 & 1 & 3 & 2 & 3 & 2 & 1 \\
37 & 4 & 5 & 5 & 4 & 3 & 4 & 3 & 3 & 4 & 4 & 3 & \inftycell \\
38 & \rngcell{7}{6} & \rngcell{10}{9} & 7 & 8 & 8 & 12 & 8 & 7 & 7 & 9 & 6 & 5 \\
39 & \rngcell{10}{9} & \rngcell{10}{9} & 10 & 10 & 12 & 14 & 12 & 11 & 10 & 10 & 10 & 9 \\
40 & 10 & 9 & 9 & 12 & 11 & 14 & 11 & 12 & 8 & 7 & 7 & 9 \\
41 & 9 & 10 & 8 & 8 & 9 & 10 & 9 & 7 & 6 & 5 & 5 & 5 \\
42 & 10 & \rngcell{16}{14} & 10 & 8 & 8 & 11 & 8 & 8 & 8 & \rngcell{11}{10} & 8 & 8 \\
43 & 8 & \rngcell{9}{7} & 7 & 5 & 3 & 3 & 3 & 3 & 5 & \rngcell{7}{6} & 5 & 5 \\
44 & 8 & 10 & 6 & 8 & 3 & 3 & 3 & 6 & 5 & 8 & 6 & 7 \\
45 & 13 & 15 & 11 & 9 & 9 & 11 & 9 & 9 & 10 & 11 & 11 & 10 \\
46 & 12 & 14 & 10 & \rngcell{11}{10} & \rngcell{11}{9} & \rngcell{17}{14} & \rngcell{11}{9} & \rngcell{9}{8} & \rngcell{9}{8} & \rngcell{12}{11} & 9 & 10 \\
47 & 11 & 13 & 11 & \rngcell{13}{12} & \rngcell{14}{12} & \rngcell{16}{13} & \rngcell{14}{12} & \rngcell{13}{12} & \rngcell{11}{10} & \rngcell{11}{10} & 11 & 11 \\
48 & 8 & \rngcell{10}{9} & 10 & 12 & \rngcell{12}{11} & \rngcell{16}{15} & 12 & 15 & 10 & 8 & 8 & 11 \\
49 & 7 & \rngcell{8}{6} & 8 & 9 & \rngcell{9}{8} & \rngcell{13}{11} & 9 & 7 & 7 & 7 & 5 & 5 \\
50 & 7 & \rngcell{15}{12} & 8 & \rngcell{9}{8} & \rngcell{9}{7} & \rngcell{13}{9} & \rngcell{9}{7} & \rngcell{9}{8} & \rngcell{8}{7} & \rngcell{14}{12} & 8 & 6 \\
\bottomrule
\end{tabular*}
\end{table}

\begin{table}[p]
\caption{Bounds on the order of the $2$-primary component of the $s$-stem of $S^n$, recorded as a base-$2$ logarithm (upper/lower), $n = 39, \dots, 50$.}
\label{tab:bounds-true-39-50}
\centering
\footnotesize
\setlength{\tabcolsep}{3pt}
\renewcommand{\arraystretch}{1.08}
\begin{tabular*}{\linewidth}{@{\extracolsep{\fill}} r!{\stairedge{Ld}\vrule}*{12}{c} @{\stairedge{Rd}}}
\toprule
$s \backslash n$ & 39 & 40 & 41 & 42 & 43 & 44 & 45 & 46 & 47 & 48 & 49 & 50 \\
\midrule
0 & \stabline & \stabline & \stabline & \stabline & \stabline & \stabline & \stabline & \stabline & \stabline & \stabline & \stabline & \stabline \\
1 & \stabline & \stabline & \stabline & \stabline & \stabline & \stabline & \stabline & \stabline & \stabline & \stabline & \stabline & \stabline \\
2 & \stabline & \stabline & \stabline & \stabline & \stabline & \stabline & \stabline & \stabline & \stabline & \stabline & \stabline & \stabline \\
3 & \stabline & \stabline & \stabline & \stabline & \stabline & \stabline & \stabline & \stabline & \stabline & \stabline & \stabline & \stabline \\
4 & \stabline & \stabline & \stabline & \stabline & \stabline & \stabline & \stabline & \stabline & \stabline & \stabline & \stabline & \stabline \\
5 & \stabline & \stabline & \stabline & \stabline & \stabline & \stabline & \stabline & \stabline & \stabline & \stabline & \stabline & \stabline \\
6 & \stabline & \stabline & \stabline & \stabline & \stabline & \stabline & \stabline & \stabline & \stabline & \stabline & \stabline & \stabline \\
7 & \stabline & \stabline & \stabline & \stabline & \stabline & \stabline & \stabline & \stabline & \stabline & \stabline & \stabline & \stabline \\
8 & \stabline & \stabline & \stabline & \stabline & \stabline & \stabline & \stabline & \stabline & \stabline & \stabline & \stabline & \stabline \\
9 & \stabline & \stabline & \stabline & \stabline & \stabline & \stabline & \stabline & \stabline & \stabline & \stabline & \stabline & \stabline \\
10 & \stabline & \stabline & \stabline & \stabline & \stabline & \stabline & \stabline & \stabline & \stabline & \stabline & \stabline & \stabline \\
11 & \stabline & \stabline & \stabline & \stabline & \stabline & \stabline & \stabline & \stabline & \stabline & \stabline & \stabline & \stabline \\
12 & \stabline & \stabline & \stabline & \stabline & \stabline & \stabline & \stabline & \stabline & \stabline & \stabline & \stabline & \stabline \\
13 & \stabline & \stabline & \stabline & \stabline & \stabline & \stabline & \stabline & \stabline & \stabline & \stabline & \stabline & \stabline \\
14 & \stabline & \stabline & \stabline & \stabline & \stabline & \stabline & \stabline & \stabline & \stabline & \stabline & \stabline & \stabline \\
15 & \stabline & \stabline & \stabline & \stabline & \stabline & \stabline & \stabline & \stabline & \stabline & \stabline & \stabline & \stabline \\
16 & \stabline & \stabline & \stabline & \stabline & \stabline & \stabline & \stabline & \stabline & \stabline & \stabline & \stabline & \stabline \\
17 & \stabline & \stabline & \stabline & \stabline & \stabline & \stabline & \stabline & \stabline & \stabline & \stabline & \stabline & \stabline \\
18 & \stabline & \stabline & \stabline & \stabline & \stabline & \stabline & \stabline & \stabline & \stabline & \stabline & \stabline & \stabline \\
19 & \stabline & \stabline & \stabline & \stabline & \stabline & \stabline & \stabline & \stabline & \stabline & \stabline & \stabline & \stabline \\
20 & \stabline & \stabline & \stabline & \stabline & \stabline & \stabline & \stabline & \stabline & \stabline & \stabline & \stabline & \stabline \\
21 & \stabline & \stabline & \stabline & \stabline & \stabline & \stabline & \stabline & \stabline & \stabline & \stabline & \stabline & \stabline \\
22 & \stabline & \stabline & \stabline & \stabline & \stabline & \stabline & \stabline & \stabline & \stabline & \stabline & \stabline & \stabline \\
23 & \stabline & \stabline & \stabline & \stabline & \stabline & \stabline & \stabline & \stabline & \stabline & \stabline & \stabline & \stabline \\
24 & \stabline & \stabline & \stabline & \stabline & \stabline & \stabline & \stabline & \stabline & \stabline & \stabline & \stabline & \stabline \\
25 & \stabline & \stabline & \stabline & \stabline & \stabline & \stabline & \stabline & \stabline & \stabline & \stabline & \stabline & \stabline \\
26 & \stabline & \stabline & \stabline & \stabline & \stabline & \stabline & \stabline & \stabline & \stabline & \stabline & \stabline & \stabline \\
27 & \stabline & \stabline & \stabline & \stabline & \stabline & \stabline & \stabline & \stabline & \stabline & \stabline & \stabline & \stabline \\
28 & \stabline & \stabline & \stabline & \stabline & \stabline & \stabline & \stabline & \stabline & \stabline & \stabline & \stabline & \stabline \\
29 & \stabline & \stabline & \stabline & \stabline & \stabline & \stabline & \stabline & \stabline & \stabline & \stabline & \stabline & \stabline \\
30 & \stabline & \stabline & \stabline & \stabline & \stabline & \stabline & \stabline & \stabline & \stabline & \stabline & \stabline & \stabline \\
31 & \stabline & \stabline & \stabline & \stabline & \stabline & \stabline & \stabline & \stabline & \stabline & \stabline & \stabline & \stabline \\
32 & \stabline & \stabline & \stabline & \stabline & \stabline & \stabline & \stabline & \stabline & \stabline & \stabline & \stabline & \stabline \\
33 & \stabline & \stabline & \stabline & \stabline & \stabline & \stabline & \stabline & \stabline & \stabline & \stabline & \stabline & \stabline \\
34 & \stabline & \stabline & \stabline & \stabline & \stabline & \stabline & \stabline & \stabline & \stabline & \stabline & \stabline & \stabline \\
35 & \stabline & \stabline & \stabline & \stabline & \stabline & \stabline & \stabline & \stabline & \stabline & \stabline & \stabline & \stabline \\
36 & \stabline & \stabline & \stabline & \stabline & \stabline & \stabline & \stabline & \stabline & \stabline & \stabline & \stabline & \stabline \\
37 & 2 & \stabline & \stabline & \stabline & \stabline & \stabline & \stabline & \stabline & \stabline & \stabline & \stabline & \stabline \\
38 & 4 & 3 & \stabline & \stabline & \stabline & \stabline & \stabline & \stabline & \stabline & \stabline & \stabline & \stabline \\
39 & 9 & \inftycell & 9 & \stabline & \stabline & \stabline & \stabline & \stabline & \stabline & \stabline & \stabline & \stabline \\
40 & 8 & 9 & 8 & 7 & \stabline & \stabline & \stabline & \stabline & \stabline & \stabline & \stabline & \stabline \\
41 & 6 & 7 & 6 & \inftycell & 5 & \stabline & \stabline & \stabline & \stabline & \stabline & \stabline & \stabline \\
42 & 8 & 11 & 8 & 7 & 6 & 5 & \stabline & \stabline & \stabline & \stabline & \stabline & \stabline \\
43 & 4 & 4 & 4 & 3 & 3 & \inftycell & 3 & \stabline & \stabline & \stabline & \stabline & \stabline \\
44 & 3 & 3 & 3 & 5 & 4 & 5 & 4 & 3 & \stabline & \stabline & \stabline & \stabline \\
45 & 9 & 9 & 8 & 8 & 9 & 9 & 8 & \inftycell & 7 & \stabline & \stabline & \stabline \\
46 & 10 & \rngcell{13}{12} & 9 & 8 & 8 & 10 & 7 & 6 & 5 & 4 & \stabline & \stabline \\
47 & 13 & \rngcell{15}{14} & 13 & 12 & 11 & 11 & 11 & 10 & 10 & \inftycell & 10 & \stabline \\
48 & 10 & 13 & 10 & 11 & 7 & 6 & 6 & 8 & 7 & 8 & 7 & 6 \\
49 & 6 & 7 & 6 & 4 & 3 & 2 & 2 & 2 & 3 & 4 & 3 & \inftycell \\
50 & 6 & 9 & 6 & 6 & 6 & \rngcell{9}{8} & 6 & 6 & 6 & 9 & 6 & 5 \\
\bottomrule
\end{tabular*}
\end{table}

\clearpage
\printbibliography
\end{document}